\definecolor{limoncello}{rgb}{.95, 1, .2}
\journal{Discrete Mathematics}
\DeclareMathOperator{\ld}{ld}
\newcommand{\ZZ}{\mathbb{Z}}
\newcommand{\card}[1]{
|#1|
}
\newtheorem{theorem}{Theorem}[section]
\newtheorem{corollary}[theorem]{Corollary}
\newtheorem{lemma}[theorem]{Lemma}
\newtheorem{observation}[theorem]{Observation}
\newtheorem{proposition}[theorem]{Proposition}
\definecolor{ZERO}{rgb}{1,0,0}
\definecolor{ONE}{rgb}{0,1,0}
\definecolor{TWO}{rgb}{0,0,1}
\definecolor{grey}{rgb}{.47,.47,.47}
\definecolor{very light grey}{rgb}{.93,.93,.93}
\definecolor{dark grey}{rgb}{.2,.2,.2}
\newcommand{\wlg}{without loss of generality}
\newcommand{\cf}{c.f.}
\newcommand{\dfeq}{:=}
\newcommand{\opair}[2]{( #1, #2)}
\newcommand{\Opair}[2]{\big( #1, #2\big)}
\newcommand{\upair}[2]{\{ #1, #2\}}
\newcounter{tiefe}
\begin{document}

\begin{frontmatter}
 


\title{New bounds on the Ramsey number $r(I_m, L_n)$}


\author{Ferdinand Ihringer\fnref{fundingFI}}
\ead{ferdinand.ihringer@ugent.be}
\address{Ghent University. Department of Mathematics: Analysis, Logic, and Discrete Mathematics.}
\fntext[fundingFI]{The first author acknowledges support from a PIMS Postdoctoral Fellowship and ERC advanced grant 320924. The author is supported by a postdoctoral fellowship of the Research Foundation --- Flanders (FWO).}

\author{Deepak Rajendraprasad}
\ead{deepak@iitpkd.ac.in}
\address{Indian Institute of Technology Palakkad}

\author{Thilo Weinert\fnref{fundingTVW}}
\ead{thilo.weinert@univie.ac.at}
\address{Universit\"at Wien \\
Institut für Mathematik \\
Kurt G\"odel Research Center \\
Augasse 2-6, UZA 1 - Building 2 \\
1090 Wien \\
AUSTRIA}
\fntext[fundingTVW]{The last author acknowledges support by the Israel Science Foundation grant number 1365/14 and the FWF grant number Y1012-N35}

\begin{abstract}
We investigate the Ramsey number $r(I_m, L_n)$ which is the smallest natural number $k$ such that every oriented graph on $k$ vertices contains either an independent set of size $m$ or a transitive tournament on $n$ vertices. Continuing research by Larson and Mitchell and earlier work by Bermond we establish two new upper bounds for $r(I_m, L_3)$ which are paramount in proving $r(I_4, L_3) = 15 < 23 = r(I_5, L_3)$ and $r(I_m, L_3) = \Theta(m^2 / \log m)$, respectively. We furthermore elaborate on implications of the latter on upper bounds for $r(I_m, L_n)$.
\end{abstract}

\begin{keyword}
partition \sep oriented graph \sep ordinal \sep weakly compact \sep Ramsey \sep edge-coloured


\MSC 05D10 \sep 03E02 \sep 05C20 \sep 05C55
\end{keyword}

\end{frontmatter}

\section{Introduction}

In this paper the minimal number $\ell$ for which every oriented graph\footnote{We use the adjective ``oriented'' over ``directed'' as the graphs under discussion contain at most one edge between any two vertices. Likewise, the graphs are all loopless.} of order $\ell$ either contains an independent set of cardinality $m$ or a transitive induced subtournament of order $n$ is studied. This minimal number $\ell$ is denoted by $r(I_m, L_n)$.

The case $m = 2$ received a decent amount of attention, it is known that $r(I_2, L_3) = 4$, \cf\ \cite{956ER0}, that $r(I_2,L_4) = 8$, \cf\ \cite{964EM0} and that $r(I_2, L_5) = 14$ and $r(I_2,L_6) = 28$, \cf\ \cite{970RP0}. The general asymptotic behaviour of $r(I_2, L_n)$ was studied as well, Stearns in \cite{959S0} showed that $r(I_2, L_n) \leqslant 2^{n - 1}$, this was later improved to $r(I_2, L_n) \leqslant 7\cdot 2^{n - 4}$ for $n > 4$ by Reid and Parker in \cite{970RP0} and to $r(I_2, L_n) \leqslant 55 \cdot 2^{n - 7}$ for $n > 6$ by S{\'a}nchez-Flores in \cite{994S1}. Erd\H{o}s and Moser established $r(I_2, L_n) \geqslant 2^{(n - 1)/2}$ in \cite{964EM0}.  This case was furthermore studied in the papers \cite{971M0, 017MS0} and \cite{998S1}.

\begin{figure}
\centering
\begin{tikzpicture}[->,>=stealth',shorten >=13pt, shorten <=13pt,
                    thick,scale=0.9]
\def \n {8}
\def \radius {1cm}
\def \margin {11} 

\pgfmathsetmacro\nn{int(\n - 1)}

\foreach \s in {0,...,\nn}
{
\pgfmathsetmacro\ss{Mod(int(3*\s), 8)}
\pgfmathsetmacro\tt{(40*Mod(int(\s + 1),2)}
  
  \node[draw, circle, fill=very light grey] at ({360/\n * (\s - 1)}:{(Mod(int(\s),2) + 3)*\radius}) {$\pgfmathprintnumber{\ss}$};
  \path ({360/\n * (\s - 1)}:{(Mod(int(\s),2) + 3)*\radius}) edge[->,color=black,thick] 
    ({360/\n * (\s)}:{(Mod(int(\s + 1),2) + 3)*\radius});
  \path ({360/\n * (\s - 2)}:{(Mod(int(\s + 1),2) + 3)*\radius}) edge[<-,color=black,thick, bend right = \tt] 
    ({360/\n * (\s)}:{(Mod(int(\s + 1),2) + 3)*\radius});
}
\end{tikzpicture}
\caption{Bermond's $\upair{I_3}{L_3}$-free graph on $8$ vertices.}
\label{8vertices}
\end{figure}

By contrast, cases in which $m > 2$ were only studied in considerably fewer papers. In \cite{974B1}, Bermond proved $r(I_3,L_3) = 9$ mainly by providing an example establishing the lower bound. The numbers $r(I_m,L_n)$ for $m > 2$ were last revisited two decades ago by Larson and Mitchell \cite{997LM0}. There they proved $r(I_m,L_3)\leqslant m^2$ using a degree argument and showed $r(I_4,L_3) > 13$. This left open three possibilities for the number $r(I_4, L_3)$, the arguably easiest case among the hitherto open ones. 

There seems to be a noticable gap between the knowledge about undirected Ramsey numbers $r(I_m, K_n)$ and that on oriented Ramsey numbers $r(I_m, L_n)$ and hereby we are attempting a step in closing it. The numbers $r(I_m, K_3)$ are known for $1 < m < 10$, they are $3,6,9,14,18,23,28,36$. The last of these values was established in 1982 by Grinstead and Roberts in \cite{982GR0}. More information on small Ramsey numbers can be found in Radziszowski's survey \cite{994R0}. Moreover since Kim in \cite{Kim1995} established a lower bound of appropriate order of magnitude, we know that $r(I_m, K_3) = \Theta(m^2 / \log m)$.

Even though we are later going to establish an asymptotically better third bound, in Section~\ref{sec:upper_bnds} we provide an upper bound of $m^2 - m + 3$ for $r(I_m, L_3)$ which is better than both the aforementioned asymptotically better bound and the Larson-Mitchell-bound for $m \leqslant 2^{508}$. More importantly, it allows for the determination of $r(I_m, L_3)$ for $m \in \{4, 5\}$ by giving the correct values. Subsequently, in Section~\ref{sec:lower_bnds}, we construct oriented graphs witnessing $r(I_4, L_3) > 14$ and $r(I_5, L_3) > 22$. Thereby we prove the following.

\begin{theorem}\label{thm:small_ramsey_intro}
  $r(I_4, L_3) = 15$ and $r(I_5, L_3) = 23$.
\end{theorem}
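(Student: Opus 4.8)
The plan is to prove the two equalities by squeezing each value between matching upper and lower bounds. For the upper bounds I would invoke the improved estimate $r(I_m,L_3)\leqslant m^2-m+3$ established in Section~\ref{sec:upper_bnds}; substituting $m=4$ and $m=5$ gives $r(I_4,L_3)\leqslant 15$ and $r(I_5,L_3)\leqslant 23$. The lower bounds $r(I_4,L_3)>14$ and $r(I_5,L_3)>22$ come from the explicit oriented graphs of Section~\ref{sec:lower_bnds}. Since $14+1=15$ and $22+1=23$, the two sides meet exactly and the theorem follows.

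For the upper bound itself, the crucial structural observation is that in any $L_3$-free oriented graph $D$ both the out-neighbourhood $N^+(v)$ and the in-neighbourhood $N^-(v)$ of every vertex $v$ are independent: if $u,w\in N^+(v)$ were joined by, say, $u\to w$, then $v\to u\to w$ together with $v\to w$ would be a copy of $L_3$, and symmetrically for $N^-(v)$. Hence if $D$ is also $I_m$-free we get $|N^+(v)|,|N^-(v)|\leqslant m-1$ for every $v$, so every vertex has total degree at most $2(m-1)$. Larson and Mitchell's bound of $m^2$ follows from a count of this kind; to extract the additive improvement to $m^2-m+3$ I would argue more carefully, for instance by fixing a vertex $v$ of extremal out-degree and analysing the edges among $N^+(v)$, $N^-(v)$ and the set of non-neighbours of $v$. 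Here $L_3$-freeness gives extra leverage: every edge between $N^-(v)$ and $N^+(v)$ must be oriented from the latter to the former, for otherwise $x\to v\to y$ with $x\to y$ would again be a transitive triangle. Exploiting this to show that these neighbourhoods cannot all be filled to capacity at once should trim the naive count by roughly $m-1$ and fix the constant at the stated value.

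The genuinely hard part, and the bulk of the work, will be the lower bounds, namely exhibiting $\opair{I_4}{L_3}$-free and $\opair{I_5}{L_3}$-free oriented graphs on $14$ and $22$ vertices. A complete oriented graph is useless here, since every tournament on more than three vertices already contains $L_3$; the constructions must therefore be genuinely incomplete, balancing enough non-edges to kill every transitive triangle against few enough non-edges to keep the independence number at $3$ and $4$ respectively. I would search for these graphs among highly symmetric algebraic objects—circulant orientations, or orientations built from finite fields or from small projective geometries—so that both the absence of a transitive triangle (equivalently, the independence of $N^+(v)$ and $N^-(v)$ for each $v$) and the bound on the independence number can be verified on a single orbit of vertices rather than one vertex at a time. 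Checking that the resulting graph simultaneously has independence number exactly $m-1$ and contains no $L_3$ is where the difficulty concentrates; once a candidate with the right symmetry is in hand, these two properties reduce to a finite and essentially routine verification.
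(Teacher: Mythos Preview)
Your overall strategy---squeeze each value between the upper bound $m^2-m+3$ from Section~\ref{sec:upper_bnds} and explicit witnesses from Section~\ref{sec:lower_bnds}---is exactly what the paper does. Two points, however, deserve correction.

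First, your sketch of the upper bound is too optimistic. Observing that edges between $N^-(v)$ and $N^+(v)$ are forced to point one way is true but does not by itself trim the count from $m^2$ to $m^2-m+3$. The paper's proof of Proposition~\ref{proposition : improvement} is an induction on $m$ in which the inductive hypothesis carries not only the Ramsey bound but also a lower bound on the \emph{number of edges} in an extremal $\opair{I_m}{L_3}$-free graph on $m^2-m+2$ vertices. The argument pivots on Lemma~\ref{lemma:lower_bound_edges}, which bounds $|E(N^-(v)\cup N^+(v),\,I(v))|$ from below via a ``private neighbour'' count, and then plays this against the degree constraint to force a contradiction. The base cases $m=3,4$ require separate edge-count lemmas (Lemmas~\ref{lemma:edges_m_eq_3} and~\ref{lemma : >37}). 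Your one-vertex local analysis, without the inductive edge bookkeeping, will not close the gap.

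Second, for the lower bound at $m=4$ your plan to search among circulant orientations will fail: the paper explicitly remarks that there is \emph{no} oriented $\opair{I_4}{L_3}$-free Cayley graph on $14$ vertices. The paper's $14$-vertex witness is defined on $\ZZ_{14}$ but is not vertex-transitive---the rule $x\mapsto x+4$ for even $x$ and $x\mapsto x-6$ for odd $x$ breaks the circulant symmetry. (For $m=5$ a genuine circulant on $\ZZ_{22}$ does work.) So the symmetry you want to exploit must be weaker than full vertex-transitivity, and the verification cannot be reduced to a single orbit.
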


Since any orientation of an $\{I_m, K_3\}$-free graph is $\{I_m, L_3\}$-free,
$r(I_m, L_3) \geq r(I_m, K_3)$. Moreover, since every orientation of a graph 
which contains a $K_4$ will contain an $L_3$, $r(I_m, L_3) \leq r(I_m, K_4)$.
In Section \ref{sec:asymptotics}, we use a result of Alon \cite{996A0} to show
that $r(I_m, L_3)$ behaves more like $r(I_m, K_3)$. That is we show the following.
\begin{theorem}\label{thm:gen_asymptotic_intro}
  $r(I_m, L_3) = \Theta(m^2 / \log m)$.
\end{theorem}

Then we follow an argument of Ajtai, Koml\'os, and Szemer\'edi to give, for each $n \geqslant 3$, asymptotic upper bounds on $r(I_m, L_n)$ of the same order as the best known upper bounds for $r(I_m, K_n)$.

More concretely, we extend the result above to the following:

\begin{theorem}\label{thm:gen_asymptotic_extn_intro}
For each natural number $m$ and each natural number $n \geqslant 3$, there exists a universal constant $C_n$ such that
$r(I_m, L_n) \leqslant C_n m^{n-1} / (\log m)^{n-2}$.
\end{theorem}

Finally---in an appendix---we provide a formula which gives the best known upper bounds for small values of $m$ and $n$.

The numbers $r(I_m, L_n)$ are of interest also due to their connection to ordinal Ramsey theory, \cf\ \cite[Chapter 7]{977W0} and \cite[Chapter 2]{010HL0}. In particular, \cite[Theorem 25]{956ER0} amounts to the following:

\begin{theorem}[Erd\H{o}s and Rado \cite{956ER0}]
$r(\omega m,n)=\omega r(I_m,L_n)$ for all natural numbers $m$ and $n$.
\end{theorem}

In \cite{967ER0} Erd\H{o}s and Rado showed that for any infinite initial ordinal $\kappa$ and any natural numbers $m$ and $n$ there is a natural number $\ell$ such that $r(\kappa m, n)\leqslant\kappa\ell$. They conjectured that $\ell$ never depends on $\kappa$. In \cite{974B0} Baumgartner settled this conjecture affirmatively.

\begin{theorem}[Baumgartner \cite{974B0}]
$r(\kappa m,n)=\kappa r(I_m,L_n)$ for all infinite initial ordinals $\kappa$.
\end{theorem}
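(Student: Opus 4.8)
The plan is to prove the two inequalities separately. Writing $R=r(I_m,L_n)$ and recalling that for finite $\alpha$ the relation $\alpha\longrightarrow(\beta_0,\beta_1)^2$ is just the statement $r(\beta_0,\beta_1)\geqslant\alpha$, the goal $r(\kappa m,n)=\kappa R$ splits into the two partition facts
\[
\kappa R\longrightarrow(\kappa m,n)^2\qquad\text{and}\qquad \gamma\doesntarrow(\kappa m,n)^2\ \text{ for every }\gamma<\kappa R.
\]
I would treat $\kappa=\omega$ as the template, since it is precisely the Erd\H{o}s--Rado relation $r(\omega m,n)=\omega R$ quoted above, and lift it to an arbitrary infinite initial ordinal $\kappa$ by replacing Ramsey's theorem on $\omega$-blocks with the Dushnik--Miller relation $\kappa\longrightarrow(\kappa,n)^2$, which holds for every infinite cardinal $\kappa$ and finite $n$ and, because $\kappa$ is a cardinal, even yields homogeneous sets of \emph{order type} (not merely cardinality) $\kappa$.

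For the negative relations I would start from an oriented graph $G$ on the $R-1$ vertices $\{0,\dots,R-2\}$ witnessing $R-1<r(I_m,L_n)$, so $G$ has neither an independent set of size $m$ nor a copy of $L_n$. First I would blow up each vertex $i$ to a block $C_i$ of order type $\kappa$, placed in increasing order, colouring all pairs inside a block red, and all cross pairs red between non-adjacent blocks. For adjacent blocks I would use a half-graph keyed to the ranks of the two points in their blocks, oriented so that ``blue'' requires the rank inequality dictated by the direction of the edge of $G$ and ``red'' the reverse one. Then a blue clique meets each block at most once and its index set is a tournament whose ranks give it a topological ordering; hence blue $n$-cliques correspond exactly to copies of $L_n$ in $G$, of which there are none. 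A red set can meet two adjacent blocks in order type $\kappa$ only if the ranks in the lower block pointwise dominate those in the upper one, which two cofinal subsets of $\kappa$ cannot do; so the blocks met in a full type-$\kappa$ piece are pairwise non-adjacent, hence fewer than $m$, while the remaining finitely many blocks each contribute order type $<\kappa$. As $\kappa$ is additively indecomposable, any red set has order type $<\kappa m$. This colours $\kappa(R-1)$, and to reach an arbitrary $\gamma<\kappa R=\kappa(R-1)+\kappa$ I would append one further isolated block of order type $\delta<\kappa$ (all its cross pairs red), which adds neither a blue edge nor a full type-$\kappa$ block.

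For the positive relation $\kappa R\longrightarrow(\kappa m,n)^2$ I would fix a colouring with no blue $n$-set, split $\kappa R$ into $R$ consecutive blocks of type $\kappa$, and apply $\kappa\longrightarrow(\kappa,n)^2$ inside each to obtain a red-homogeneous $C_i$ of order type $\kappa$. The crux is then a canonical analysis of the cross-colouring of each pair $C_i<C_j$: after simultaneously shrinking the finitely many blocks (keeping each of type $\kappa$) one wants every cross-pattern reduced either to ``all red'' or to a half-graph of the shape used above. Declaring $\{i,j\}$ a non-edge in the first case and an oriented edge in the second defines an oriented graph $D$ on $\{0,\dots,R-1\}$. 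Since $R=r(I_m,L_n)$, the graph $D$ contains $I_m$ or $L_n$; an independent $m$-set yields $m$ blocks with all cross pairs red whose union is red of order type $\kappa m$, whereas a transitive tournament $L_n$ lets one pick a single point from each of $n$ blocks, in the topological order of the tournament, so that all chosen pairs are blue---an impossible blue $n$-set. Hence the red set of type $\kappa m$ must exist.

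The hard part will be exactly this canonical between-block analysis in the positive direction for general, and especially singular, $\kappa$: one must isolate the precise list of unavoidable cross-patterns, show that every pattern not shrinkable to ``all red'' carries a genuinely oriented half-graph, and verify that these half-graphs compose transitively, so that it is the copies of $L_n$ in $D$ (rather than arbitrary sub-tournaments) that force a simultaneous one-point-per-block blue selection. Keeping every shrinking at order type $\kappa$ rather than mere cardinality $\kappa$, where cofinality can interfere, is the delicate bookkeeping that makes the constant uniform in $\kappa$ and equal to $r(I_m,L_n)$.
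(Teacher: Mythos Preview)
The paper does not prove this theorem. It is quoted in the introduction as a result of Baumgartner \cite{974B0} (sharpening the earlier Erd\H{o}s--Rado result \cite{967ER0}) and serves only as set-theoretic motivation for studying the finite numbers $r(I_m,L_n)$; no argument for it appears anywhere in the text. There is therefore nothing here to compare your attempt against.

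For what it is worth, your sketch tracks the standard line. The negative direction via a half-graph blow-up of an extremal $(I_m,L_n)$-free oriented graph is sound: the rank inequalities force any all-blue set to project to a \emph{transitive} subtournament of $G$, not merely a tournament, and additive indecomposability of $\kappa$ together with the independence bound keeps every red set below order type $\kappa m$. For the positive direction you have the right architecture---thin each block to a red-homogeneous copy of $\kappa$ via $\kappa\longrightarrow(\kappa,n)^2$, canonically classify the cross-patterns, read off an auxiliary oriented graph on $R$ vertices, and invoke the definition of $R$---and you are honest that the canonical cross-block analysis is where the real work lies. That reduction, carried out uniformly and with enough coherence that an $L_n$ in the auxiliary graph genuinely yields a simultaneous blue selection, is precisely the content of Baumgartner's paper; your proposal names it but does not supply it, which is appropriate for a statement the present paper merely cites.
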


\section{Preliminaries}

Let $v$ be a vertex of an oriented graph $D = \opair{V}{A}$. We denote the \textit{in-neighbourhood} of $v$ by $N^-(v)$ and the \textit{out-neighbourhood}
of $v$ by $N^+(v)$. Formally, we have $N^-(v) = \{ w \in V: \opair{w}{v} \in A \}$ and $N^+(v) = \{ w \in V: \opair{v}{w} \in A \}$.
We denote the vertices non-adjacent to $v$ by $I(v)$, formally we have $I(v) = V \setminus (\{ v \} \cup N^-(v) \cup N^+(v) )$.
We denote $\card{N^-(v)}$ by $d^-(v)$ and $\card{N^+(v)}$ by $d^+(v)$. We call $d^-(v)$ the \emph{in-degree} of $v$ and $d^+(v)$ its \emph{out-degree}. Whenever we refer to the \emph{degree} of $v$ simpliciter, we mean the sum $d^-(v) + d^+(v)$ of its in- and out-degrees. An oriented graph is $n$-regular, whenever $d^-(v) = d^+(v) = n$ for all its vertices $v$.


\begin{lemma}\label{lemma:neighbourhoods_props}
Let $m$ and $n$ both be natural numbers, let $D = \opair{V}{A}$ be an $\upair{I_m}{L_n}$-free oriented graph and let $v \in V$. Then the following holds:
  \begin{enumerate}
   \item\label{neighbourhoods_props_one} The induced subgraphs on $N^-(v)$ and $N^+(v)$ are $\upair{ I_m }{L_{n-1}}$-free.
   \item The induced subgraph on $I(v)$ is $\upair{ I_{m-1} }{L_{n}}$-free.
  \end{enumerate}
\end{lemma}
\begin{proof}
  To show the first assertion suppose towards a contradiction that $N^-(v)$
  contains a set of vertices $T$ such that the induced subgraph on $T$ is the transitive tournament
  of size $n - 1$. Then $\{ v \} \cup T$ is the transitive tournament of size $n$. This contradicts
  that $D$ is $L_n$-free. For the induced subgraph on $N^+(v)$ one may argue analogously.
  
  To show the second assertion suppose towards a contradiction that $I(v)$ contains
  an independent set $I$ of size $m-1$. Then $\{ v \} \cup I$ is an independent set of
  size $m$. This contradicts
  that $D$ is $I_m$-free.
\end{proof}

This has the following consequences for the case $n=3$.

\begin{corollary}\label{lemma:neighbourhoods_independent}
Let $m$ be a natural number, let $D = \opair{V}{A}$ be an $\upair{I_m}{L_3}$-free oriented graph and let $v \in V$. Then $N^-(v)$ and $N^+(v)$ are independent sets.
  Particularly, $d^-(v), d^+(v) \leqslant m-1$.
\end{corollary}

We now provide a recursive upper bound for $r(I_m, L_n)$.

\begin{lemma}\label{lemma:easy_upper_bound_general_case}
We have $r(I_{m + 1}, L_{n + 1}) \leqslant 2r(I_{m + 1}, L_n) + r(I_m, L_{n + 1}) - 1$ for all natural numbers $m$ and $n$.
Furthermore, if an $\upair{I_{m+1}}{I_{n+1}}$-free oriented graph $D = \opair{V}{A}$ has order $2r(I_{m + 1}, L_n) + r(I_m, L_{n + 1}) - 2$,
then all $v \in V$ satisfy
\begin{enumerate}
 \item $d^-(v) = d^+(v) = r(I_{m+1}, L_{n})-1$ and
 \item $\card{I(v)} = r(I_m, L_{n + 1})-1$.
\end{enumerate}
\end{lemma}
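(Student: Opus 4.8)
The plan is to fix an arbitrary vertex $v$ in an $(I_{m+1}, L_{n+1})$-free graph $D = \opair{V}{A}$ and to exploit the partition $V = \{v\} \cup N^-(v) \cup N^+(v) \cup I(v)$, which is a disjoint cover of $V$ by the very definition of $I(v)$. The key input is Lemma~\ref{lemma:neighbourhoods_props}, applied with $m+1$ and $n+1$ in place of $m$ and $n$: its first part says that the induced subgraphs on $N^-(v)$ and on $N^+(v)$ are $\opair{I_{m+1}}{L_n}$-free, and its second part says that the induced subgraph on $I(v)$ is $\opair{I_m}{L_{n+1}}$-free. By the definition of the Ramsey numbers as thresholds, an $\opair{I_{m+1}}{L_n}$-free graph has at most $r(I_{m+1}, L_n)-1$ vertices and an $\opair{I_m}{L_{n+1}}$-free graph has at most $r(I_m, L_{n+1})-1$ vertices, so these freeness statements translate directly into $d^-(v), d^+(v) \leqslant r(I_{m+1}, L_n) - 1$ and $|I(v)| \leqslant r(I_m, L_{n+1}) - 1$.

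Next I would simply add these up. Since the four parts partition $V$, we obtain
$$|V| = 1 + d^-(v) + d^+(v) + |I(v)| \leqslant 1 + 2\bigl(r(I_{m+1}, L_n) - 1\bigr) + \bigl(r(I_m, L_{n+1}) - 1\bigr),$$
which simplifies to $|V| \leqslant 2r(I_{m+1}, L_n) + r(I_m, L_{n+1}) - 2$. As this holds for every $\opair{I_{m+1}}{L_{n+1}}$-free graph, no such graph can have $2r(I_{m+1}, L_n) + r(I_m, L_{n+1}) - 1$ vertices, and hence $r(I_{m+1}, L_{n+1}) \leqslant 2r(I_{m+1}, L_n) + r(I_m, L_{n+1}) - 1$, which is the first assertion.

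For the \emph{furthermore} claim I would run the equality case of the very same inequality. If $D$ has exactly $|V| = 2r(I_{m+1}, L_n) + r(I_m, L_{n+1}) - 2$ vertices, then the displayed chain must collapse to an equality. Since each of the three summands $d^-(v)$, $d^+(v)$ and $|I(v)|$ is individually bounded above by its respective Ramsey number minus one, equality in the sum forces each summand to attain its upper bound. This yields $d^-(v) = d^+(v) = r(I_{m+1}, L_n) - 1$ and $|I(v)| = r(I_m, L_{n+1}) - 1$ for every $v \in V$, as required.

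The argument is essentially a degree count, so I do not anticipate a genuine obstacle; the only points demanding care are the off-by-one bookkeeping when passing between a freeness statement and the corresponding Ramsey threshold, and the correct index shift when invoking Lemma~\ref{lemma:neighbourhoods_props} (one must apply it at $m+1, n+1$, so that the neighbourhoods become $\opair{I_{m+1}}{L_n}$-free and the non-neighbourhood becomes $\opair{I_m}{L_{n+1}}$-free, rather than reading off the indices $m, n$ directly). I would also read the hypothesis of the \emph{furthermore} clause as ``$\opair{I_{m+1}}{L_{n+1}}$-free'' so that the second part of Lemma~\ref{lemma:neighbourhoods_props} applies verbatim.
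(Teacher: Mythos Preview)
Your proposal is correct and follows essentially the same approach as the paper: partition $V$ around a vertex $v$, apply Lemma~\ref{lemma:neighbourhoods_props} to bound $d^-(v)$, $d^+(v)$, and $|I(v)|$, and sum. Your treatment of the equality case is in fact more explicit than the paper's, which simply writes ``This implies the assertion''; your reading of the hypothesis as $\opair{I_{m+1}}{L_{n+1}}$-free is also correct (the printed $\opair{I_{m+1}}{I_{n+1}}$ is a typo).
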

\begin{proof}
Let $D$ be an $\upair{I_{m+1}}{L_{n+1}}$-free oriented graph.
Let $v \in D$. By Lemma \ref{lemma:neighbourhoods_props}, $N^-(v)$ and $N^+(v)$ have at most size $r(I_{m+1}, L_n)-1$ each,
and $I(v)$ has at most size $r(I_m, L_{n+1})-1$. Hence,
\begin{align*}
  \card{V} &\leqslant \card{\{ v \}} + \card{N^-(v)} + \card{N^+(v)} + \card{I(v)}\\
  &\leqslant 1 + 2(r(I_{m+1}, L_n)-1) + r(I_m, L_{n+1})-1\\
  &= 2r(I_{m + 1}, L_n) + r(I_m, L_{n + 1}) - 2.
\end{align*}
This implies the assertion.
\end{proof}


The following lemma goes back to Larson and Mitchell, \cf\ \cite{997LM0}. It follows from Corollary \ref{lemma:neighbourhoods_independent}
in connection with Lemma \ref{lemma:easy_upper_bound_general_case}. We will later improve on 
it with Proposition \ref{proposition : improvement}.

\begin{lemma}[Larson and Mitchell]
\label{larson-mitchell-bound}
$r(I_m, L_3) \leqslant m^2$ for all natural numbers $m \geqslant 2$.
\end{lemma}

A proof of the following lemma can be found in \cite{959S0} so we do state, yet not prove it.

\begin{lemma}
\label{lemma : exponential bound}
$r(I_2, L_n) \leqslant 2^{n - 1}$ for all natural numbers $n \geqslant 2$.\qed
\end{lemma}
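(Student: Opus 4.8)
The plan is to proceed by induction on $n$, exploiting the fact that an $I_2$-free oriented graph is precisely a tournament (no pair of vertices is non-adjacent), so that $r(I_2, L_n)$ is just the least $k$ for which every tournament on $k$ vertices contains a transitive subtournament on $n$ vertices. For the base case I would take $n = 1$: a tournament on $2^0 = 1$ vertex trivially contains the one-vertex transitive tournament $L_1$. (The case $n=2$ is equally immediate, since any tournament on $2^1 = 2$ vertices consists of the single edge $L_2$.)

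For the inductive step, assume every tournament on $2^{n-1}$ vertices contains $L_n$, and let $D = \opair{V}{A}$ be an arbitrary tournament on $2^n$ vertices. Fix a vertex $v$. Because $D$ is a tournament, the remaining $2^n - 1$ vertices are partitioned into $N^-(v)$ and $N^+(v)$, so by pigeonhole one of these two sets has size at least $\lceil (2^n - 1)/2 \rceil = 2^{n-1}$. The induced subgraph on that set is again a tournament, so the induction hypothesis yields a transitive tournament $T$ on $n$ vertices inside it.

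It then remains to glue $v$ onto $T$. If the large set is $N^-(v)$, then $(v,w) \in A$ for every $w \in T$, so $v$ dominates all of $T$ and may be prepended as the source of the transitive order of $T$; symmetrically, if the large set is $N^+(v)$, then $v$ is dominated by every vertex of $T$ and may be appended as a sink. In either case the result is a transitive tournament on $n+1$ vertices, so $D$ contains $L_{n+1}$. This closes the induction and gives $r(I_2, L_{n+1}) \leqslant 2^n$, as required.

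There is no real obstacle here beyond bookkeeping; the one point meriting care is the pigeonhole step, where $(2^n - 1)/2$ rounds up to \emph{exactly} $2^{n-1}$, which is precisely what makes the bound sharp enough to propagate through the induction. As a shortcut, the same estimate falls out of the recursion in Lemma~\ref{lemma:easy_upper_bound_general_case} taken at $m = 1$: since the only $\opair{I_1}{L_{n+1}}$-free graph is empty, one has $r(I_1, L_{n+1}) = 1$, whence $r(I_2, L_{n+1}) \leqslant 2\,r(I_2, L_n)$ and the bound $2^{n-1}$ follows by the same induction.
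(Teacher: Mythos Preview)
Your proof is correct and essentially identical to the paper's: both pick a vertex $v$, use pigeonhole to find a neighbourhood of size at least $2^{n-2}$, apply the inductive hypothesis inside that neighbourhood, and adjoin $v$ to the resulting transitive subtournament. The paper merely phrases the same induction as a minimal-counterexample argument and omits the explicit gluing step; your additional remark deriving the recursion from Lemma~\ref{lemma:easy_upper_bound_general_case} at $m=1$ is a valid alternative packaging of the same idea.
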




\section{Improving the Larson-Mitchell Upper Bound}\label{sec:upper_bnds}

In this section we improve Lemma~\ref{larson-mitchell-bound} and show that 
$r(I_m, L_3) \leqslant m^2 - m + 3$ for all $m \geqslant 3$. 
This upper bound turns out to be tight for $m \in \{3,4,5\}$.

If $D = \opair{V}{A}$ is an oriented graph and $B, C \subset V$, 
let $E(B, C)$ denote the set of edges between vertices in $B$ and vertices in $C$, 
irrespective of their direction. Formally we have $E(B, C) = A \cap \left(\left(B \times C\right) \cup \left(C \times B\right)\right)$.

For the following lemma and its proof, note that whenever we refer to a triangle without specifying that it be transitive or cyclic, it may be either. In particular, when we say that a subset $S$ of vertices in $D$ contains a triangle, we mean the oriented subgraph of $D$ induced by $S$ contains either a transitive or a cyclic triangle. 

\begin{lemma}\label{lemma:edges_m_eq_3}
Up to isomorphism there is exactly one $\upair{I_3}{L_3}$-free oriented graph $D$ on eight vertices. It has the following properties:
  \begin{enumerate}
   \item \label{edges:firstitem} $D$ is $2$-regular,
   \item every triple of vertices of $D$ contains at least one edge,
   \item the non-neighbourhood of any vertex of $D$ induces a triangle, 
   \item any set of $5$ vertices in $D$ either contains a triangle or the induced 
	 underlying unoriented graph is isomorphic to $C_5$,
   \item any set of $6$ vertices in $D$ contains a triangle.
  \end{enumerate}
\end{lemma}

\begin{proof}
  As $r(I_2, L_3) = 4$ and $r(I_3, L_2) = 3$, the bound in Lemma \ref{lemma:easy_upper_bound_general_case} is tight.
  Hence, the oriented graph is $2$-regular. The second and third assertion follow from $D$
  being $I_3$-free.
  
  Let $M$ be a set of five vertices of $D$. We assume
  that $M$ does not contain a triangle. We can ignore the orientation of the edges. 
  Let $x \in M$. By part $3$, $\card{I(x) \cap M} \leqslant 2$. 
  If $\card{I(x) \cap M} \leqslant 1$, then $\card{M \cap (N^+(x) \cup N^-(x))} \geqslant 3$, so
  part $2$ implies the assertion. Hence, $\card{I(x) \cap M} = 2$ for all $x \in M$.
  Hence, the induced underlying subgraph on $M$ is isomorphic to a cycle of length $5$.
  This implies the fourth assertion. The fifth assertion follows similarly.

  Now we show the uniqueness of $D$. W.l.o.g. the vertex set of $D$ is 
  $\{ 0, 1, 2, 3, 4, 5, 6, 7 \}$, where $N^+(0) = \{ 2, 3\}$, $N^-(0) = \{ 5, 6 \}$,
  and $I(0) = \{ 1, 4, 7 \}$. As $I(0)$ is $\upair{I_2}{L_3}$-free, w.l.o.g. we have the edges
  \begin{align*}
    \opair{1}{4}, \opair{4}{7}, \text{ and } \opair{7}{1}
  \end{align*}
  in $D$. By definition, we have
  \begin{align*}
    \opair{0}{2}, \opair{0}{3}, \opair{5}{0}, \text{ and } \opair{6}{0}
  \end{align*}
  in $D$. As every vertex in $D$ has degree $4$,
  \begin{align*}
    \card{E(N^+(0) \cup N^-(0), I(0)) } = 4 \card{ I(0) } - 2 \card{ E(I(0), I(0))} = 12 - 6 = 6.
  \end{align*}
  Hence,
  \begin{align*}
    2 \card{ E(N^+(0), N^-(0)) } = 4 \cdot 3 - \card{E(N^+(0) \cup N^-(0), I(0)) } = 6.
  \end{align*}
  As $D$ is $L_3$-free, the three edges in $E(N^+(0), N^-(0))$ go from $N^+(0)$
  to $N^-(0)$, so w.l.o.g. we can assume that the edges
  \begin{align*}
    \opair{3}{6}, \opair{2}{5}, \text{ and } \opair{3}{5}
  \end{align*}
  are in $D$. As $3$ has in-degree $2$, there is one edge from $I(0)$ to $3$,
  w.l.o.g. that is $\opair{1}{3}$. As $I(3)$ is $\upair{I_2}{L_3}$-free, the edges
  $\opair{7}{2}$ and $\opair{2}{4}$ are in $D$. Similarly, $I(2)$ is $\upair{I_2}{L_3}$-free, so $\opair{6}{1}$ is an
  edge of $D$. As the out- and in-degrees of all vertices are $2$, the edges
  $\opair{5}{7}$ and $\opair{4}{6}$ are in $D$. Now we have given all $16$ oriented edges of $D$ without loss of generality.
\end{proof}

The unique $\upair{I_3}{L_3}$-free oriented graph on eight vertices may be defined on $\ZZ_{8}$ by setting both $x\mapsto x+1$ and $x\mapsto x-2$,
see Figure \ref{8vertices}.

\begin{lemma}
\label{lemma : >37}
An $\upair{I_4}{L_3}$-free oriented graph on fourteen vertices contains at least $38$ edges.
\end{lemma}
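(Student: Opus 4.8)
The plan is to convert the edge count into a constraint on the degree sequence and then to show that vertices of non-maximal degree cannot be too plentiful. Write $N(v) = N^-(v) \cup N^+(v)$. By Corollary~\ref{lemma:neighbourhoods_independent} we have $d^-(v), d^+(v) \leqslant 3$, so every degree is at most $6$; by Lemma~\ref{lemma:neighbourhoods_props} the non-neighbourhood $I(v)$ is $\opair{I_3}{L_3}$-free and hence, since $r(I_3, L_3) = 9$, has at most $8$ vertices. As $\{v\}$, $N(v)$ and $I(v)$ partition the $14$ vertices, we obtain the identity $\deg(v) + |I(v)| = 13$, which pins every degree to $\{5, 6\}$ and yields $\deg(v) = 5$ exactly when $|I(v)| = 8$. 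Writing $n_6$ for the number of degree-$6$ vertices, the handshake lemma gives $2|A| = 70 + n_6$, so $|A| = 35 + n_6/2$ and the claim $|A| \geqslant 38$ is equivalent to $n_6 \geqslant 6$. Since $2|A|$ is even, $n_6$ is even, so it suffices to exclude $n_6 \leqslant 4$, that is, to show that at most $8$ vertices have degree $5$.

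Next I would extract the local structure forced at a degree-$5$ vertex. If $\deg(v) = 5$ then $|I(v)| = 8$ and $I(v)$ is the unique graph of Lemma~\ref{lemma:edges_m_eq_3} (Figure~\ref{8vertices}), in particular $4$-regular; thus each $w \in I(v)$ has exactly $4$ neighbours inside $I(v)$ and therefore $\deg(w) - 4 \in \{1, 2\}$ neighbours in $N(v)$. A short bookkeeping over the partition $\{v\} \sqcup N(v) \sqcup I(v)$ — comparing $\sum_x \deg(x) = 70 + n_6$ with the sixteen edges inside $I(v)$, the five edges from $v$ to $N(v)$, and the fact that $N^-(v), N^+(v)$ are independent with all mutual edges oriented one way — collapses to the single identity $e(N(v)) = 6 + (j_v - k_v)/2$, where $j_v$ and $k_v$ count the degree-$6$ vertices in $N(v)$ and in $I(v)$ respectively, and $j_v + k_v = n_6$. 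Since $N^-(v)$ and $N^+(v)$ have sizes $\{2, 3\}$, the number $e(N(v))$ of edges inside $N(v)$ is at most $6$, whence $k_v \geqslant n_6/2$: \emph{every degree-$5$ vertex carries at least half of the degree-$6$ vertices inside its $D_8$-shaped non-neighbourhood.}

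The hard part is to leverage this into $n_6 \geqslant 6$, and here I expect pure counting to fall short. A double count of the pairs $(v, w)$ with $\deg(v) = 5$, $\deg(w) = 6$ and $w \in I(v)$, weighed against $k_v \geqslant n_6/2$, only reproduces inequalities that hold automatically for every $n_6 \in \{0, 2, 4\}$, precisely because all the degree data are mutually consistent. The genuine obstruction must therefore come from the rigid \emph{oriented} geometry: for each degree-$5$ vertex the set $I(v)$ is a prescribed circulant, its three non-neighbours inside $I(v)$ form a triangle, and $L_3$-freeness dictates exactly how the one or two edges from each $w \in I(v)$ may attach to $N(v)$. I would therefore finish by analysing how these overlapping copies of the graph of Lemma~\ref{lemma:edges_m_eq_3} can coexist on $14$ vertices, the extreme case being $n_6 = 0$, where \emph{every} vertex has degree $5$, every $I(v)$ is that circulant, and every $N(v)$ is a complete bipartite graph with all edges oriented the same way on parts of sizes $2$ and $3$; the goal is to show that such a globally over-determined configuration forces either a transitive triangle or an independent quadruple, contradicting $\opair{I_4}{L_3}$-freeness, and similarly for $n_6 \in \{2, 4\}$. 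Establishing this incompatibility — rather than the degree bookkeeping — is where the real difficulty lies.
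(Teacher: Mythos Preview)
Your setup is correct and matches the paper's: degrees lie in $\{5,6\}$, and the problem reduces to ruling out $n_6\leqslant 4$. Your bookkeeping identity $|E(N(v),I(v))|=8+k_v$ for a degree-$5$ vertex $v$ is also correct and is exactly what the paper uses (implicitly). The gap is that you abandon counting too early. You try to bound $k_v$ from below via the \emph{upper} bound $e(N(v))\leqslant 6$, which only yields $k_v\geqslant n_6/2$ and indeed goes nowhere. The paper instead bounds $|E(N(v),I(v))|$ from below directly, using the $I_4$-freeness you never invoked at this stage.

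Concretely: take $v$ with $d^-(v)=2$, $d^+(v)=3$. Since $N^+(v)$ is an independent triple and $D$ is $I_4$-free, every vertex of $I(v)$ has a neighbour in $N^+(v)$, so $|E(N^+(v),I(v))|\geqslant 8$. For $N^-(v)$, observe that any four vertices of the $L_3$-free set $I(v)$ contain an independent pair (as $r(I_2,L_3)=4$); hence at most three vertices of $I(v)$ can be non-adjacent to both vertices of $N^-(v)$, for otherwise such a pair together with $N^-(v)$ would form an $I_4$. Thus $|E(N^-(v),I(v))|\geqslant 5$, and altogether $|E(N(v),I(v))|\geqslant 13$. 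Plugging into your own identity gives $k_v\geqslant 5$, which already contradicts $n_6\leqslant 4$. No analysis of overlapping circulants is needed; the ``over-determined configuration'' argument you sketch is unnecessary, and your expectation that pure counting must fail is mistaken.
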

\begin{proof}
We show the statement by contradiction.
Let $D = \opair{V}{A}$ be a $14$-vertex $\{I_4, L_3\}$-free oriented graph with $|A| < 42$.
By Corollary \ref{lemma:neighbourhoods_independent}, every vertex has in-degree and out-degree at most $3$.
Since $|A| < 42$, the sum of in-degrees is less than $42$ and hence there exists a vertex $v$ with 
in-degree at most $2$. By Lemma \ref{lemma:neighbourhoods_props}, $d^-(v) + d^+(v) \geqslant 14 - r(I_3, L_3) = 5$.
Since $d^+(v) \leqslant 3$, we have $d^-(v) = 2$ and $d^+(v) = 3$.

Since $N^+(v)$ is already an independent set of size $3$, and $D$ is $I_4$-free,
each of the $8$ vertices in $I(v)$ is adjacent to at least one vertex of $N^+(v)$.
Hence $\card{E(N^+(v), I(v))} \geqslant 8$. 

Similarly, we get $\card{E(N^-(v), I(v))} \geqslant 5$: Assume to the contrary that $\card{E(N^-(v), I(v))} < 5$. Let $F$ denote
the vertices in $I(v)$ which are not in an edge of 
$E(N^-(v), I(v))$. As $\card{I(v)} = 8$, we have $\card{F} \geqslant 4$.
As $r(I_2, L_3) = 4$, we find an independent set $F'$ of
size $2$ in $F$ and thus $F' \cup N^-(v)$ is an independent set of size $4$. This contradicts that $D$ is $I_4$-free.

By Lemma \ref{lemma:edges_m_eq_3}, $\card{E(I(v), I(v))} = 16$. Let $y$ be the number of vertices $w \in I(v)$
with $d^-(w)+d^+(w) = 6$. Then
\begin{align*}
  45 &\leqslant \card{E(N^+(v), I(v))} + \card{E(N^-(v), I(v))} + 2  \cdot \card{E(I(v), I(v))} \\&= 6y + 5(8-y).
\end{align*}
Hence, $y \geqslant 5$.
This, together with the handshaking lemma, ensures that
there are at most $8$ vertices $u$ with $d^-(u)+d^+(u) = 5$.
As all the vertices $w$ in $D$ satisfy $d^-(w) + d^+(w) \in \{ 5, 6\}$, 
we have $|A| \geqslant (8 \cdot 5 + 6 \cdot 6)/2 = 38$.
\end{proof}

One can improve the previous argument to show that there are at least $41$ edges, but it is slightly more tedious and not needed in the following.

\begin{lemma}\label{lemma:lower_bound_edges}
 Let $m$ be a natural number and suppose that $D = \opair{V}{A}$ is an $\upair{I_m}{L_3}$-free oriented graph. Let $v \in V$ with $d^-(v) = m-1$ and $w \in V$ with $d^+(w) = m-1$. Then
 \begin{align*}
  &\card{E(N^-(v), I(v))} \geqslant 2(\card{I(v)} - m + 1) \intertext{ and }
  &\card{E(N^+(w), I(w))} \geqslant 2(\card{I(w)} - m + 1).
 \end{align*}
\end{lemma}
\begin{proof}
  By Corollary \ref{lemma:neighbourhoods_independent}, $N^-(v)$ is an independent set of size $m-1$. 
  Notice that each $x \in I(v)$ is adjacent to at least one vertex of $N^-(v)$ as otherwise
  $N^-(v) \cup \{ x \}$ is an independent set of size $m$.
  We call $x \in I(v)$ a \emph{private neighbour} (with respect to $N^-(v)$) if $x$ has
  exactly one neighbour in $N^-(v)$. We claim that a vertex $u \in N^-(v)$ is adjacent
  to at most two private neighbours.
  
  Suppose that $u$ is adjacent to three private neighbours, call them $x, y$ and $z$.
  If $x, y$ and $z$ are all adjacent, then the induced subgraph on $\{ u, x, y, z \}$ is
  an $\upair{I_2}{L_3}$-free graph. This contradicts $r(I_2, L_3) = 4$.
  If \wlg{} $x$ and $y$ are not adjacent, then $\{ x, y \} \cup N^-(v) \setminus \{ u \}$
  is an independent set of size $m$. This contradicts thats $D$ is $I_m$-free.
  This shows our claim.
  
  Hence, each $u \in N^-(v)$ is adjacent to at most two private neighbours in $I(v)$.
Let P denote the set of private neighbours in $I(v)$ (with respect to
$N^-(v)$). Since every vertex in $I(v) \setminus P$ has at least two neighbours
in $N^-(v)$ and $\card{P} \leqslant 2\card{N^-(v)} = 2d^-(v)$, we have: 
  \begin{align*}
    \card{E(N^-(v), I(v))} \geqslant & \card{P} + 2(\card{I(v)} -\card{P}) = 2\card{I(v)} - \card{P} \\
    \geqslant & 2(\card{I(v)} - d^-(v)) = 2(\card{I(v)} - m +1).
  \end{align*}
  An analogous argument shows 
  \begin{align*}
    \card{E(N^+(w), I(w))} &\geqslant 2(\card{I(w)} - m + 1).
  \end{align*}
  The assertion follows.
\end{proof}


\begin{proposition}
\label{proposition : improvement}
If $m$ is a natural number, where $m \geqslant 2$, then (1) every $\upair{I_m}{L_3}$-free oriented graph on $m^2 - m + 2$ vertices has at least $(m^2 - m + 2)(2m - 3) / 2$ edges and that (2) $r(I_m, L_3) \leqslant m^2 - m + 3$.
\end{proposition}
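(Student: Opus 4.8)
The plan is to prove both assertions simultaneously by induction on $m$, since they reinforce one another: the edge bound at level $m$ follows cheaply from the vertex bound at level $m-1$, whereas the vertex bound at level $m$ is the genuinely hard part and consumes the edge bound one level down. Write $f(m) = r(I_m, L_3)$; the inductive hypothesis is that $f(k) \leqslant k^2 - k + 3$ and that every $\opair{I_k}{L_3}$-free graph on $k^2 - k + 2$ vertices has at least $(k^2-k+2)(2k-3)/2$ edges, for all $k < m$. The base case $m = 3$ is handed to us by Lemma~\ref{lemma:edges_m_eq_3}: the unique $8$-vertex graph is $4$-regular, hence has $16 \geqslant 12$ edges, and $f(3) = 9 = 3^2 - 3 + 3$.

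For the edge bound at level $m$, let $D$ be $\opair{I_m}{L_3}$-free on $N = m^2 - m + 2$ vertices. By Lemma~\ref{lemma:neighbourhoods_props} each $v$ satisfies $|I(v)| \leqslant f(m-1) - 1$, and the inductive vertex bound gives $f(m-1) \leqslant m^2 - 3m + 5$. Hence the total degree satisfies
\begin{align*}
  d^-(v) + d^+(v) = N - 1 - |I(v)| \geqslant (m^2 - m + 1) - (m^2 - 3m + 4) = 2m - 3 ,
\end{align*}
and summing over $v$ and halving yields the asserted bound $(m^2-m+2)(2m-3)/2$. This is the routine half.

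For the vertex bound I would argue by contradiction, assuming a $\opair{I_m}{L_3}$-free graph $D$ exists on $m^2 - m + 3$ vertices. The crude count $|V| \leqslant 1 + d^-(v) + d^+(v) + |I(v)|$, using $d^\pm(v) \leqslant m-1$ from Corollary~\ref{lemma:neighbourhoods_independent} and $|I(v)| \leqslant f(m-1)-1$, together with $f(m-1) \leqslant m^2 - 3m + 5$, already caps $|V|$ at $m^2 - m + 3$; since we are assuming equality, every inequality must be tight, so $f(m-1) = m^2 - 3m + 5$, the graph $D$ is $(2m-2)$-regular, and for each $v$ the set $I(v)$ induces a \emph{maximum}-size $\opair{I_{m-1}}{L_3}$-free graph on $m^2 - 3m + 4 = (m-1)^2 - (m-1) + 2$ vertices. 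Now I would double-count the edges meeting $I(v)$: as each $u \in I(v)$ has total degree $2m-2$ and is non-adjacent to $v$,
\begin{align*}
  (2m-2)\,|I(v)| = 2\,|E(I(v), I(v))| + |E(N^-(v) \cup N^+(v), I(v))| .
\end{align*}
Feeding in the inductive edge bound $|E(I(v),I(v))| \geqslant (m^2-3m+4)(2m-5)/2$ together with the lower bound $|E(N^-(v) \cup N^+(v), I(v))| \geqslant 4(|I(v)| - m + 1)$ of Lemma~\ref{lemma:lower_bound_edges} (applicable because $d^-(v) = d^+(v) = m-1$) collapses, after simplification, to $m^2 - 7m + 8 \leqslant 0$, which is false for $m \geqslant 6$. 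This contradiction forces $|V| \leqslant m^2 - m + 2$, i.e.\ $f(m) \leqslant m^2 - m + 3$.

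The main obstacle is precisely that $m^2 - 7m + 8 \leqslant 0$ still holds for $m \in \{4,5\}$, so the \emph{generic} edge bound on $I(v)$ is too weak to close the argument there. The remedy is to substitute the sharper counts available for small parameters: for $m = 4$ the set $I(v)$ is the unique graph of Lemma~\ref{lemma:edges_m_eq_3}, carrying $16$ rather than the generic $12$ edges, and for $m = 5$ Lemma~\ref{lemma : >37} supplies $38$ rather than the generic $35$ edges on $14$ vertices. In both cases $2\,|E(I(v),I(v))|$ becomes large enough that $|E(N^-(v)\cup N^+(v), I(v))|$ drops strictly below the threshold $4(|I(v)| - m + 1)$ demanded by Lemma~\ref{lemma:lower_bound_edges}, reinstating the contradiction. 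Thus the real work is front-loaded into establishing those two strengthened edge counts; once they are in place the induction runs uniformly from $m \geqslant 6$ onward, and the edge bound drops out as a free by-product at each step.
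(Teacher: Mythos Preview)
Your proposal is correct and follows essentially the same route as the paper: an induction on $m$ in which the vertex bound at the current level is derived by forcing tightness in the neighbourhood decomposition, then combining the degree identity $(2m-2)|I(v)| = 2|E(I(v),I(v))| + |E(N^\pm(v),I(v))|$ with Lemma~\ref{lemma:lower_bound_edges} and the inductive edge bound on $I(v)$, with the small cases $m\in\{4,5\}$ handled separately via Lemmas~\ref{lemma:edges_m_eq_3} and~\ref{lemma : >37}. Your index shift (proving level $m$ from $m-1$ rather than $m+1$ from $m$), your order of presenting the edge bound before the vertex bound, and your compact reformulation of the generic contradiction as $m^2 - 7m + 8 \leqslant 0$ are cosmetic differences only.
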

\begin{proof}
The proposition 
will be established by induction on $m$.
As there are no $\upair{I_2}{L_3}$-free oriented graphs on four vertices, the statement of the proposition is vacuously true in the case $m = 2$. By Lemmas \ref{larson-mitchell-bound} and \ref{lemma:edges_m_eq_3}\eqref{edges:firstitem} it holds in case $m = 3$ as well. Henceforth we assume $m \geqslant 3$ and the truth of the proposition for $m$. We show it holds for $m + 1$ as well. To this end, let $D = \opair{V}{A}$ an $\upair{I_{m + 1}}{L_3}$-free graph with $|V| \geqslant (m + 1)^2 - (m + 1) + 2 = m^2 + m + 2$.

Note that this induction is slightly twisted. First we use the induction hypothesis on (2) for $m$ to show (1) for $m+1$. Then we use the induction hypothesis on (1) and (2) for $m$ to show (2) for $m+1$.

In order to show that there are no fewer edges in $D$ than claimed, consider that as $D$ is $\upair{I_{m + 1}}{L_3}$-free, by Lemma \ref{lemma:easy_upper_bound_general_case} the non-neighbourhood $I(v)$ of any vertex $v \in V$ induces an $\upair{I_m}{L_3}$-free oriented subgraph $D_v = \Opair{I(v)}{A \upharpoonright (I(v) \times I(v))}$ of $D$. By our induction hypothesis we have $\card{I(v)} \leqslant m^2 - m + 2$ and hence $d^-(v) + d^+(v) \geqslant (m^2 + m + 2) - 1 - (m^2 - m + 2) = 2(m + 1) - 3$ for all $v \in V$. Hence the number of edges in $D$ is at least
\begin{align*}
\frac{1}{2}\sum_{v \in V} \big(d^-(v) + d^+(v)\big)\geqslant \frac{1}{2}\sum_{v \in V} \big(2(m + 1) - 3\big) =
\frac{\card{V}\big(2(m + 1) - 3\big)}{2}.
\end{align*}
For $|V| = m^2+m+2$, this shows (1) for $m+1$.

It remains to show that $|V| > m^2+m+2$ cannot occur.
Assume towards a contradiction that $|V| = m^2 + m + 3$. 
As $r(I_{m+1}, L_2) = m+1$ and, by induction hypothesis, $r(I_m, L_3) \leqslant m^2 - m + 3$, we have equality in Lemma \ref{lemma:easy_upper_bound_general_case}.
By Lemma \ref{lemma:easy_upper_bound_general_case}, $d^-(v) = d^+(v) = m$ and $\card{I(v)} = m^2-m+2$ for all $v \in V$. 

Let us fix $v$. As $d^-(v) = d^+(v) = m$, we can apply Lemma \ref{lemma:lower_bound_edges}
and obtain
\begin{align}\label{eq:lower_bnd}
  \card{E(N^-(v) \cup N^+(v), I(v))} \geqslant 4(\card{I(v)} - m) = 4(m^2-2m+2).
\end{align}
As $d^-(w) = d^+(w) = m$ for $w \in I(v)$, we have that
\begin{align}\label{eq:upper_bnd}
  \card{E(N^-(v) \cup N^+(v), I(v))} + 2\card{E(I(v), I(v))} \\
  = 2m \cdot \card{I(v)} = 2m(m^2-m+2). \notag
\end{align}
Now we will employ our knowledge about the degrees and the induction hypothesis 
for the number of edges in an $\upair{I_m}{L_3}$-free oriented graph on $m^2 - m + 2$ vertices.
We distinguish three cases (a), (b), and (c). Let $\ell := \card{E(N^-(v) \cup N^+(v), I(v))}$.
Case (a): If $m=3$, then, by Lemma \ref{lemma:edges_m_eq_3}, $\card{E(I(v), I(v))} \geqslant 16$.
	Then, by \eqref{eq:upper_bnd}, $\ell \leqslant 16$.
	But, by \eqref{eq:lower_bnd}, $\ell \geqslant 20$. 
	This is a contradiction.
Case (b): If $m=4$, then by Lemma \ref{lemma : >37}, $\card{E(I(v), I(v))} \geqslant 38$.
	Then, by \eqref{eq:upper_bnd}, $\ell \leqslant 36$.
	But, by \eqref{eq:lower_bnd}, $\ell \geqslant 40$. 
	Again, this is a contradiction.
Case (c): If $m > 4$, then we have $\card{E(I(v), I(v))} \geqslant (2m - 3)(m^2 - m + 2) / 2$
    by the induction hypothesis. By 
    \eqref{eq:upper_bnd},
    \begin{align*}
      \ell = \card{E(N^-(v) \cup N^+(v), I(v))} \leqslant 3m^2 - 3m + 6.
    \end{align*}
    As $m > 4$, this contradicts 
    \eqref{eq:lower_bnd}.
\end{proof}

\section{Constructive Lower Bounds}\label{sec:lower_bnds}

\begin{figure}
\centering

\begin{tikzpicture}[->,>=stealth',shorten >=13pt, shorten <=13pt,
                    thick,rotate=14,scale=1.3]
\def \n {7}
\def \radius {1.45cm}
\def \margin {11} 

\foreach \s in {1,...,\n}
{
  \pgfmathsetmacro\ss{Mod(int(7-6*\s), 14)}
  \pgfmathsetmacro\tt{Mod(int(-6*\s), 14)}
  
  \node[draw, circle, thick, scale=0.75, minimum size=30dd, fill=very light grey] at ({360/\n * (\s - 1)}:\radius) {$\pgfmathprintnumber{\tt}$};
  \path ({360/\n * (\s - 2)}:\radius) edge[<-,color=black,thick,bend left=8] 
    ({360/\n * (\s)}:\radius);
  \path ({360/\n * (\s - 4)}:\radius) edge[->,color=black,thick,bend right=5] 
    ({360/\n * (\s)}:\radius);
  
  \node[draw, circle, thick, scale=0.75, minimum size=30dd, fill=very light grey] at ({360/\n * (\s - 1)}:{2*\radius}) {$\pgfmathprintnumber{\ss}$};
  \path ({360/\n * (\s - 1)}:{2*\radius}) edge[->,color=black,thick,bend right=5] 
    ({360/\n * (\s)}:{2*\radius});
  \path ({360/\n * (\s - 2)}:{2*\radius}) edge[<-,color=black,thick,bend right=15] 
    ({360/\n * (\s)}:{2*\radius});
    
  \path ({360/\n * (\s - 1)}:{2*\radius}) edge[->,color=black,thick,bend right=5] 
    ({360/\n * (\s)}:{\radius});
  \path ({360/\n * (\s - 1)}:{\radius}) edge[->,color=black,thick,bend right=5] 
    ({360/\n * (\s)}:{2*\radius});
}
\end{tikzpicture}
\caption{An oriented graph showing $r(I_4,L_3)>14$.}
\label{fourteen}
\end{figure}

\begin{figure}
\centering

\begin{tikzpicture}[->,>=stealth',shorten >=13pt, shorten <=13pt,
                    thick,rotate=14,scale=1.4]
\def \n {22}
\def \radius {3.3cm}
\def \margin {11} 

\foreach \s in {1,...,\n}
{
  \pgfmathsetmacro\ss{int(\s - 1)}
  
  \node[draw, circle, scale=0.8, thick, fill=very light grey, minimum size=25dd] at ({360/\n * (\s - 1)}:\radius) {$\pgfmathprintnumber{\ss}$};
  \path ({360/\n * (\s - 1)}:\radius) edge[->,color=black,bend right=0] 
    ({360/\n * (\s)}:\radius);
  \path ({360/\n * (\s - 4)}:\radius) edge[->,color=dark grey,bend right=70, looseness=1.5] 
    ({360/\n * (\s)}:\radius);
  \path ({360/\n * (\s - 10)}:\radius) edge[->,color=grey,bend right=0] 
    ({360/\n * (\s)}:\radius);
  \path ({360/\n * (\s - 5)}:\radius) edge[<-,color=black,bend left=20] 
    ({360/\n * (\s)}:\radius);
}
\end{tikzpicture}
\caption{An oriented graph showing $r(I_5,L_3)>22$.}
\label{twentytwo}
\end{figure}

\begin{observation}\label{obs:ramsey_I4_L3}
$r(I_4, L_3) = 15$.
\end{observation}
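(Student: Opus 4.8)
The asserted equality is equivalent to the two inequalities $r(I_4,L_3)\leqslant 15$ and $r(I_4,L_3)\geqslant 15$, and I would establish them separately. The upper bound is immediate: instantiating Proposition \ref{proposition : improvement} at $m=4$ gives $r(I_4,L_3)\leqslant 4^2-4+3=15$, so no oriented graph on $15$ vertices is $\opair{I_4}{L_3}$-free. All of the real work therefore lies in the lower bound, which demands exhibiting a single $\opair{I_4}{L_3}$-free oriented graph on $14$ vertices; this is exactly the graph $D$ drawn in Figure \ref{fourteen}.

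My plan for the lower bound is first to record $D$ algebraically rather than pictorially. The figure presents $D$ as two concentric heptagons, so I would take its vertex set to be two copies of $\ZZ_7$ and write the arc set as a union of $\ZZ_7$-orbits, making the cyclic group of order $7$ a group of automorphisms of $D$. This symmetry is the decisive economy: each property to be verified is invariant under the automorphism group, so it need only be checked at one representative vertex from each of the two orbits rather than at all fourteen.

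The verification then splits into two local conditions. For $L_3$-freeness I would use the elementary converse of the reasoning behind Corollary \ref{lemma:neighbourhoods_independent}: a transitive triangle $a\to b\to c$ with $a\to c$ would put the arc $bc$ inside $N^+(a)$, so $D$ is $L_3$-free precisely when every out-neighbourhood spans no arc; since $d^+(v)\leqslant 3$ this is a glance at the out-neighbourhood of each representative. For $I_4$-freeness I would use the reformulation that a four-element independent set $\{a,b,c,d\}$ forces $\{b,c,d\}$ to be an independent triple inside $I(a)$, and conversely; hence $D$ is $I_4$-free exactly when the induced subgraph on $I(v)$ contains no independent triple, for each $v$. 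Again this reduces to inspecting the non-neighbourhood of the two representatives.

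The main obstacle is the $I_4$-check. Unlike the $L_3$-condition, which only examines a three-element out-neighbourhood, it requires showing that the non-adjacency graph restricted to the seven- or eight-element set $I(v)$ contains no triangle, and this is the one place where a genuine, if short, case analysis over pairs of non-neighbours cannot be avoided. I would organise it by the $\ZZ_7$-action on $I(v)$, listing the non-adjacencies within each representative non-neighbourhood and ruling out a mutually non-adjacent triple directly from the explicit arc set of $D$.
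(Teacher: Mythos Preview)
Your proposal is correct and follows essentially the same approach as the paper: the upper bound comes from Proposition~\ref{proposition : improvement} at $m=4$, and the lower bound comes from exhibiting the $14$-vertex graph of Figure~\ref{fourteen}. The paper parametrises this graph on $\ZZ_{14}$ (arcs $x\mapsto x+1$, $x\mapsto x-2$, together with $x\mapsto x+4$ for even $x$ and $x\mapsto x-6$ for odd $x$), whereas you describe it as two copies of $\ZZ_7$; these are the same object via the even/odd decomposition of $\ZZ_{14}$, and the map $x\mapsto x+2$ is precisely your $\ZZ_7$-automorphism. Your verification plan (checking that $N^+(v)$ is independent for $L_3$-freeness, and that $I(v)$ is $I_3$-free for $I_4$-freeness, at one representative per orbit) is sound and in fact more explicit than the paper, which simply presents the construction and leaves the check to the reader. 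One small correction: since every vertex has in- and out-degree exactly $3$, each $I(v)$ has size $7$, not ``seven or eight''.
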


\begin{proof}
By Proposition \ref{proposition : improvement}, $r(I_4, L_3) \leqslant 15$.
The oriented $\upair{I_4}{L_3}$-free graph in Figure \ref{fourteen} may be defined on $\ZZ_{14}$ by setting both $x\mapsto x+1$ and $x\mapsto x-2$ for all $x\in \ZZ_{14}$ and moreover $x\mapsto x+4$ if $x$ is even and $x\mapsto x-6$ if $x$ is odd.
\end{proof}

We want to remark that there is no oriented $\upair{I_4}{L_3}$-free Cayley graph on $14$ vertices.

\begin{observation}\label{obs:ramsey_I5_L3}
$r(I_5, L_3) = 23$.
\end{observation}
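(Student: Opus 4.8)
The plan is to prove the two inequalities $r(I_5,L_3)\leqslant 23$ and $r(I_5,L_3)>22$ separately. The upper bound is immediate from Proposition \ref{proposition : improvement} with $m=5$, which gives $r(I_5,L_3)\leqslant 5^2-5+3=23$, so the whole content of the observation is the lower bound. For this I would exhibit an $\opair{I_5}{L_3}$-free oriented graph $D$ on $22$ vertices, namely the Cayley graph on $\ZZ_{22}$ with connection set $S=\{1,4,10,-5\}=\{1,4,10,17\}$ pictured in Figure \ref{twentytwo}; that is, I place an oriented edge $x\mapsto x+s$ for every $x\in\ZZ_{22}$ and every $s\in S$. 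First I would check that this is genuinely an oriented graph, i.e.\ that no pair of vertices receives two oppositely directed edges: this amounts to $S\cap(-S)=\emptyset$, and indeed $-S=\{5,12,18,21\}$ is disjoint from $S$ (in particular $11\notin S$). By vertex-transitivity every vertex then has in- and out-degree $4$ and a non-neighbourhood of size $13$.

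Next I would verify $L_3$-freeness. By Corollary \ref{lemma:neighbourhoods_independent} a transitive triangle corresponds exactly to a solution of $s_1+s_2=s_3$ with $s_1,s_2,s_3\in S$: a triangle $a\mapsto b\mapsto c$ with $a\mapsto c$ forces $(b-a)+(c-b)=(c-a)$, with all three differences in $S$. By vertex-transitivity it suffices to rule this out, and running through the ten sums $s_i+s_j$ with $i\leqslant j$ one finds that they reduce modulo $22$ to $\{2,5,8,11,12,14,18,20,21\}$, none of which lies in $S$. Hence $D$ has no transitive triangle.

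The hard part is $I_5$-freeness, and here I would pass to arithmetic. A set $A\subseteq\ZZ_{22}$ is independent in $D$ exactly when no two of its elements differ by an element of $\pm S=\{\pm1,\pm4,\pm5,\pm10\}$, equivalently when all of its pairwise differences avoid the (symmetric) forbidden set $\{1,4,5,10,12,17,18,21\}$. The goal is to show that such an $A$ has at most four elements; the set $\{0,2,8,11\}$ shows this would be tight. Using vertex-transitivity I would assume $0\in A$ and, for a hypothetical $A$ of size $5$, record the five cyclic gaps $g_1,\dots,g_5$ between consecutive points, so $\sum g_i=22$. Since arcs of $3$ or $4$ consecutive gaps are complementary (mod $22$) to arcs of $2$ or $1$, and the forbidden set is symmetric, the only constraints are that each single gap avoids $\{4,5,10,12\}$ (forcing $g_i\in\{2,3,6,7,8,9,11,13,14\}$) and that each of the five adjacent pair-sums avoids $\{4,5,10,12\}$. \textbf{The main obstacle is the resulting case analysis}, but it is short: I would first enumerate the few multisets of five allowed gaps summing to $22$ (organised by the largest gap), and then dispatch each one using the pair-sum constraint. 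The recurring mechanism is that the smaller gaps have too few admissible neighbours---for instance a gap $2$ cannot neighbour a $2$ (sum $4$) or a $3$ (sum $5$), and a gap $3$ cannot neighbour a gap $7$ (sum $10$)---so in every multiset some gap is left without two legal neighbours; the extremal case $\{6,6,6,2,2\}$ additionally requires three $6$'s to be pairwise non-adjacent on a $5$-cycle, which is impossible since $\alpha(C_5)=2$. Establishing that every case fails shows that the clique number of $\mathrm{Cay}(\ZZ_{22},\{2,3,6,7,8,9,11,13,14,15,16,19,20\})$ is $4$, completing the lower bound and hence the observation; alternatively the same fact can be confirmed by an exhaustive computer search over the four-subsets of $\ZZ_{22}$ containing $0$.
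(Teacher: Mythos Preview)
Your approach is essentially identical to the paper's: the upper bound via Proposition~\ref{proposition : improvement} and the lower bound via the very same Cayley graph on $\ZZ_{22}$ with connection set $\{1,4,10,17\}$. The paper merely states the construction and leaves verification implicit, whereas you carry out the $L_3$-freeness check (sum-free condition on $S$) and the $I_5$-freeness check (the cyclic-gap case analysis) explicitly; apart from a slightly compressed justification for the range $g_i\in\{2,\dots,14\}$ (you need the full forbidden set to exclude $g_i=1$ before restricting to $\{4,5,10,12\}$), the argument is correct.
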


\begin{proof}
By Proposition \ref{proposition : improvement}, $r(I_5, L_3) \leqslant 23$.
The oriented $\upair{I_5}{L_3}$-free graph in Figure \ref{twentytwo} may be defined on $\ZZ_{22}$ by setting both $x\mapsto x+1$, $x\mapsto x+4$, $x\mapsto x-5$ and $x\mapsto x+10$ for all $x\in \ZZ_{22}$.
\end{proof}

Both observations together imply Theorem \ref{thm:small_ramsey_intro}.

\section{Probabilistic Upper Bounds}\label{sec:asymptotics}

In this section, we use a result of Alon to show that 
$r(I_m, L_3)$ is in $O(m^2 / \log m)$. 
This bound is better than the one in Proposition \ref{proposition : improvement} 
for large enough $m$. Moreover, this is tight upto multiplicative constants 
since $r(I_m, L_3) \geqslant r(I_m, K_3)$.
Then we follow an upper bound argument of Ajtai, Koml\'os and Szemer\'edi for
$r(I_m, K_n)$ to obtain upper bounds of commensurate order for $r(I_m, L_n)$.

Note that $\ld$ stands for \emph{logarithm dualis}, the logarithm to base $2$.

\begin{proposition}[{\cite[Prop. 2.1]{996A0}}]\label{prop:alon_bound}
Let $G = \opair{V}{E}$ be a graph on $v$ vertices with maximum degree $d \geqslant 1$, in which the neighbourhood of any vertex is $r$-colourable. Then
\begin{align*}
\alpha(G)\geqslant\frac{v\ld d}{160d\ld(r+1)}.
\end{align*}
\end{proposition}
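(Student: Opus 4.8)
The plan is to argue probabilistically: exhibit a random independent set whose expected size is at least the claimed quantity, so that $\alpha(G)$, being at least this expectation, satisfies the bound. Concretely, I would sample a random independent set $\mathbf{I}$ --- either by the random-greedy procedure on a uniformly random ordering of $V$, or (equivalently in spirit) from the hard-core distribution at a carefully chosen fugacity $\lambda$ --- and write $\alpha(G) \geqslant \mathbb{E}|\mathbf{I}| = \sum_{w \in V} \Pr[w \in \mathbf{I}]$. Everything then reduces to a good pointwise lower bound on the inclusion probability $\Pr[w \in \mathbf{I}]$.

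The heart of the matter is to estimate $\Pr[w \in \mathbf{I}]$ using only local information about $N(w)$, and this is where the hypothesis that each neighbourhood is $r$-colourable must be spent. I would fix an $r$-colouring $N(w) = C_1 \cup \dots \cup C_r$ into independent sets, and control the event that $w$ is \emph{blocked} (some neighbour selected before or instead of $w$) by splitting it across the $r$ colour classes. The value of the colouring is that each $C_j$ is an independent set in $G$, so within a single class the selection events behave almost independently and can be handled by a one-class computation; aggregating the $r$ classes then costs only a factor logarithmic in $r$. Optimising the free parameter (the position threshold in the greedy ordering, or the fugacity $\lambda$) at roughly the scale $\tfrac{\log d}{d}$ should yield a bound of the shape $\Pr[w \in \mathbf{I}] \gtrsim \frac{\log d}{d\,\log(r+1)}$.

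To finish, I would sum this estimate over all $w$. Since the pointwise bound is a decreasing function of the degree $d(w)$ and is governed by the maximum degree $d \geqslant 1$, a short monotonicity argument --- the map $x \mapsto \tfrac{\log x}{x}$ is decreasing for $x \geqslant e$, so $d(w) \leqslant d$ gives $\tfrac{\log d(w)}{d(w)} \geqslant \tfrac{\log d}{d}$ --- lets me replace each $d(w)$ by the common upper bound $d$ without weakening the lower bound, producing $\sum_w \Pr[w \in \mathbf{I}] \geqslant \frac{v \log d}{160\, d \log(r+1)}$ once the absolute constant is tracked.

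I expect the local estimate --- turning ``$N(w)$ is $r$-colourable'' into the factor $1/\log(r+1)$ --- to be the main obstacle. The naive Tur\'an-type analysis (each vertex precedes all its neighbours) only yields the weaker $\Pr[w \in \mathbf{I}] \approx 1/(d(w)+1)$, with no logarithmic gain; extracting the extra $\log d$ factor requires genuinely exploiting that the neighbourhoods are sparse, and doing so while keeping the dependence on $r$ down to $\log(r+1)$ rather than a power of $r$ is the delicate quantitative step. Pinning down an explicit absolute constant such as $1/160$ there, as opposed to an asymptotic $(1+o(1))$ statement, is what forces the careful bookkeeping in the one-class computation and its aggregation over colours.
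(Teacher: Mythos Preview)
The paper does not prove this proposition; it is quoted from \cite[Prop.~2.1]{996A0} and used as a black box in Corollary~\ref{cor:tight_asymptotic_bnd}. There is no proof in the present paper against which to compare your attempt.

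On the merits of the sketch itself: you have correctly identified that the entire content lies in the local estimate $\Pr[w\in\mathbf I]\gtrsim \log d\,/\bigl(d\log(r+1)\bigr)$, and you explicitly flag it as the main obstacle --- but you have not supplied a mechanism for it. Saying that within each colour class $C_j\subset N(w)$ the selection events ``behave almost independently'' is not enough: under both the random-greedy and the hard-core measures the blocking events coming from different colour classes are strongly correlated through the rest of the graph, so the claim that aggregating $r$ classes costs only a factor $\log(r+1)$ is an assertion rather than an argument. Extracting the logarithmic gain in $d$ genuinely requires a Shearer-type recursion or an equivalent device, not merely the observation that each $C_j$ is independent. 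A minor side issue: your monotonicity step is wrong as written, since $x\mapsto(\log x)/x$ is increasing on $(1,e)$ and vanishes at $x=1$, so for a vertex with $d(w)\in\{1,2\}$ one has $(\log d(w))/d(w) < (\log d)/d$ whenever $d\geqslant 3$; low-degree vertices of course have \emph{higher} inclusion probability, so this is repairable, but the replacement of $d(w)$ by $d$ is not valid as stated.
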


\begin{corollary}\label{cor:tight_asymptotic_bnd}
\label{upper bound}
\begin{align*}
r(I_m, L_3)\leqslant\frac{508 m^2}{\ld m} \text{ for all natural numbers } m \geqslant 2.
\end{align*}
\end{corollary}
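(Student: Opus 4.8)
The plan is to apply Alon's bound (Proposition~\ref{prop:alon_bound}) to the underlying unoriented graph of an $\opair{I_m}{L_3}$-free oriented graph. Suppose $D = \opair{V}{A}$ is $\opair{I_m}{L_3}$-free, let $G$ be its underlying (unoriented) graph, and set $v = |V|$. Since $r(I_m, L_3)$ exceeds by exactly one the number of vertices of the largest $\opair{I_m}{L_3}$-free graph, it suffices to bound $v$; concretely I aim for the strict bound $v < 320(\ld 3)\,m^2/\ld m$, from which $r(I_m, L_3) \leqslant 320(\ld 3)\,m^2/\ld m + 1 \leqslant 2^9 m^2/\ld m$, the last inequality being routine bookkeeping once we know $320\ld 3 < 2^9$.

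First I would verify the three hypotheses of Proposition~\ref{prop:alon_bound} for $G$. Because $D$ is $I_m$-free, $G$ has no independent set of size $m$, so $\alpha(G) \leqslant m-1$. By Corollary~\ref{lemma:neighbourhoods_independent} both $N^-(w)$ and $N^+(w)$ are independent sets of size at most $m-1$, so every vertex $w$ has $G$-degree $d^-(w) + d^+(w) \leqslant 2(m-1)$; hence the maximum degree $d$ satisfies $d \leqslant 2m-2$. Moreover the neighbourhood of $w$ in $G$ is $N^-(w) \cup N^+(w)$, a union of two independent sets with empty intersection (as $D$, being oriented, has no double edges), and colouring $N^-(w)$ with one colour and $N^+(w)$ with another is a proper colouring. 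Thus the neighbourhood of every vertex is $2$-colourable, and Proposition~\ref{prop:alon_bound} applies with $r = 2$.

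Feeding these in gives $m-1 \geqslant \alpha(G) \geqslant \dfrac{v\,\ld d}{160\,d\,\ld 3}$, which I would rearrange to $v \leqslant 160(\ld 3)(m-1)\,d/\ld d$. The final step eliminates $d$ in favour of $m$: as $x/\ld x$ is increasing for $x \geqslant 3$ and $d \leqslant 2m-2 < 2m$, we get $d/\ld d \leqslant 2m/\ld(2m) < 2m/\ld m$, whence $v < 320(\ld 3)\,m^2/\ld m$. It then remains only to check the numerical inequality $320\ld 3 < 512 = 2^9$, which completes the argument.

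The main obstacle here is constant bookkeeping rather than any conceptual difficulty: for the binary logarithm $320\log_2 3 \approx 507$ sits only just below $2^9$, so the two estimates $d \leqslant 2m$ and $d/\ld d \leqslant 2m/\ld m$ must be made without waste, and one must keep the final integer rounding honest using the small gap $2^9 - 320\ld 3$. A minor side issue is the degenerate low-degree range $d \leqslant 2$, where $x/\ld x$ is not yet monotone; but there the greedy bound $\alpha(G) \geqslant v/(d+1) \geqslant v/3$ forces $v < 3m$ whenever $D$ is $I_m$-free, which is far below the claimed bound, so these cases never threaten the estimate.
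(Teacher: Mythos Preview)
Your proof is correct and follows essentially the same route as the paper: pass to the underlying undirected graph, observe that every neighbourhood is $2$-colourable and the maximum degree is below $2m$, apply Alon's bound with $r=2$, and finish with the numerical check $320\ld 3 < 2^9$. The only cosmetic difference is that the paper argues by contradiction (assuming $v = 2^9 m^2/\ld m$ and deriving $2^9 < 320\ld 3$) whereas you bound $v$ directly; you are also slightly more careful than the paper about the degenerate low-degree case.
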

\begin{proof}
Assume towards a contradiction that there are a natural number $m$ and an oriented graph $D$ on $v\dfeq 508 m^2/\ld m$ vertices with no transitive triangle and no independent set of size $m$. Let $G$ be the undirected graph obtained from $D$ by forgetting the directions of the edges. Let $d$ denote the maximal degree of a vertex in $G$. We distinguish two overlapping cases:

Firstly we assume $d \leqslant 253$. Then, by Tur\'an's bound, we have
\begin{align*}
m > \frac{v}{d + 1} = \frac{508 m^2}{(d + 1)\ld m} \geqslant \frac{508 m^2}{(253 + 1)\ld m} = \frac{508 m^2}{254 \ld m} = \frac{2m^2}{\ld m}.
\end{align*}
This clearly implies $\ld m > 2m$, so $m > 2^{2m}$, a contradiction.

Secondly we assume $d \geqslant 3$. Since $d < 2m$ by Corollary \ref{lemma:neighbourhoods_independent} and $\ld(x)/x$ is decreasing for $x > e$,
\begin{align*}
\frac{\ld d}{d} \geqslant \frac{\ld(2m)}{2m},
\end{align*}
Note that the neighbourhood of any vertex  $x$ is $2$-colourable since it consists of the in- and out-neighbourhoods of $x$, both of which are independent sets so
by Proposition \ref{prop:alon_bound}, we may conclude that
\begin{align*}
m > \alpha(G)\geqslant & \frac{508 m^2}{\ld m}\cdot\frac{\ld(d)}{160\cdot d\ld3}\geqslant \frac{508 m^2}{\ld m}\cdot\frac{\ld(2m)}{160\cdot2m\ld3}\\
= & \frac{508m(\ld2+\ld m)}{320\ld3\ld m}=\frac{127 m}{80\ld3}(1+\frac{1}{\ld m}).
\end{align*}
It follows that $127<80\ld 3 < 126.8$ which is a contradiction.
\end{proof}

Due to Kim \cite{Kim1995}, $r(I_m, K_3) \geqslant \Theta(m^2 / \log m)$. 
Since $r(I_m, L_3) \geqslant r(I_m, K_3)$, this shows Theorem \ref{thm:gen_asymptotic_intro}.

We follow an argument by Ajtai, Koml\'os, and Szemer\'edi
for $\upair{I_m}{K_n}$-free graphs \cite{Ajtai1980} to obtain another upper bound for $\upair{I_m}{L_n}$-free graphs.

Following the proof of \cite[Lemma 4]{Ajtai1980}, which is a standard application of Chebyshev's and
Markov's inequalities, we obtain the following lemma.

\begin{lemma} \label{lemma:aks_lemma}
 Let $D$ be an oriented graph with $v$ vertices, $e$ edges, $h$ transitive triangles, and average degree $d$.
 Let $0 < p < 1$.
 Then there exists an induced subgraph $D'$ of $D$ with $v'$ vertices, $e'$ edges, $h'$ transitive triangles, and average degree $d'$
 satisfying
 \begin{align*}
    v' \geqslant vp/2, && e' \leqslant 3ep^2 && h' \leqslant 3hp^3, && d' \leqslant 6dp.
 \end{align*}
\end{lemma}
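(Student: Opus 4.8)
The plan is to prove this by the probabilistic method, taking $D'$ to be the subgraph induced by a uniformly random vertex set of a carefully chosen \emph{fixed} size. The point of fixing the size, rather than including each vertex independently with probability $p$, is that it makes the vertex count $v'$ deterministic: then the bound $v' \geqslant vp/2$ holds by construction and no concentration estimate for $|S|$ is needed, so the only probabilistic content is controlling $e'$ and $h'$, which Markov's inequality delivers. I would first dispose of the degenerate regime: if $vp < 2$, pick $S$ to be a single vertex, so that $e' = h' = 0$ (trivially meeting the upper bounds) while $v' = 1 \geqslant vp/2$. I may also assume $e \geqslant 1$ and $h \geqslant 1$, since if either vanishes the corresponding conclusion is automatic and its Markov step is vacuous.

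So assume $vp \geqslant 2$ and set $s := \lceil vp/2 \rceil$. The key elementary estimate is $vp/2 \leqslant s \leqslant vp$, the upper bound holding because $\lceil x \rceil \leqslant 2x$ for $x \geqslant 1$; in particular $s/v \leqslant p$. I then choose $S \subseteq V$ uniformly among the $\binom{v}{s}$ subsets of size $s$ and let $D'$ be the induced subgraph on $S$, so $v' = s$. An edge of $D$ survives in $D'$ exactly when both its endpoints lie in $S$, and an induced transitive triangle of $D$ survives exactly when all three of its vertices lie in $S$, so linearity of expectation gives
\begin{align*}
\mathbb{E}[e'] = e\,\frac{s(s-1)}{v(v-1)} \leqslant e\left(\frac{s}{v}\right)^2 \leqslant ep^2, \qquad \mathbb{E}[h'] = h\,\frac{s(s-1)(s-2)}{v(v-1)(v-2)} \leqslant h\left(\frac{s}{v}\right)^3 \leqslant hp^3,
\end{align*}
where each first inequality uses $\frac{s-i}{v-i} \leqslant \frac{s}{v}$ for $s \leqslant v$ and each second uses $s/v \leqslant p$. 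By Markov's inequality, $\Pr[e' > 3ep^2] \leqslant \tfrac13$ and $\Pr[h' > 3hp^3] \leqslant \tfrac13$, so a union bound leaves probability at least $\tfrac13 > 0$ that both bounds hold at once. Hence some size-$s$ set $S$ realises $e' \leqslant 3ep^2$ and $h' \leqslant 3hp^3$ simultaneously, and for it $v' = s \geqslant vp/2$.

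The average-degree bound then follows algebraically, with no further randomness: since $d' = 2e'/v'$, the bounds already secured give $d' \leqslant \frac{2\cdot 3ep^2}{vp/2} = \frac{12ep}{v} = 6p\cdot\frac{2e}{v} = 6dp$. Thus all four inequalities hold for the chosen $D'$.

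The one genuine subtlety, which I expect to be the main obstacle, is making the vertex lower bound coexist with the edge and triangle upper bounds for \emph{every} $0 < p < 1$, including the regime where $vp$ is small. Independent $p$-sampling would force a concentration argument for $|S|$ (e.g.\ Chebyshev), and such a bound degrades precisely when $vp$ is small, so it does not yield the lemma in full generality. The fixed-size selection removes this difficulty by making $v'$ exactly $\lceil vp/2\rceil$, and the separate $vp < 2$ case handles the remaining corner; beyond this design choice I anticipate only routine bookkeeping in the estimates above.
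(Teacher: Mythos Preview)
Your argument is correct. The paper itself does not supply a proof of this lemma; it simply cites \cite[Lemma~4]{Ajtai1980} and states the result, so there is nothing against which to compare line by line. Your proof is a clean, self-contained rendering of the Ajtai--Koml\'os--Szemer\'edi idea: pass to a random induced subgraph, compute expectations, and extract a good instance via Markov and a union bound.

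The one place where you deviate from the textbook version is worth flagging as a genuine (if small) improvement. The standard AKS argument samples each vertex independently with probability~$p$, then controls $v'$ via a concentration bound; as you note, Chebyshev gives $\Pr[v' < vp/2] \leqslant 4/(vp)$, which only drops below $1/3$ once $vp > 12$, so the lemma \emph{as stated for all $0<p<1$} is not literally delivered that way without a separate small-$vp$ argument. Your fixed-size sampling with $s = \lceil vp/2 \rceil$ makes $v'$ deterministic, so the union bound only has two events and works uniformly in~$p$; the residual corner $vp<2$ is then trivial. In the paper's application (Lemma~\ref{lemma:prob_lower_bnd_independence}) one has $d$ large and hence $vp$ large, so the distinction is immaterial there, but your version proves the lemma in the exact generality in which it is stated.
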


We also need the average version of Alon's bound. 
The constant in the bound can be easily verified from the proof there.
\begin{theorem}[{\cite[Theorem 1.1]{996A0}}]\label{thm:alon_bound2}
Let $G = \opair{V}{E}$ be a graph on $v$ vertices with average degree $d \geqslant 1$, in which the neighbourhood of any vertex is $r$-colourable. Then
\begin{align*}
\alpha(G)\geqslant\frac{v\ld(2d)}{640 d\ld(r+1)}.
\end{align*}
\end{theorem}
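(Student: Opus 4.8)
The plan is to derive this average-degree bound from the maximum-degree bound of Proposition \ref{prop:alon_bound} by the standard device of discarding the vertices of large degree, so that no genuinely new probabilistic argument is needed.

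First I would pass to a subgraph of bounded maximum degree. Since $G$ has average degree $d$ on $v$ vertices, the sum of all degrees is $vd$. Consequently the number of vertices whose degree exceeds $2d$ is strictly less than $v/2$, for otherwise these vertices alone would contribute more than $(v/2)\cdot 2d = vd$ to the degree sum. Let $W$ be the set of vertices of degree at most $2d$ and let $H$ be the subgraph induced on $W$; then $|W| \geqslant v/2$ and the maximum degree $D$ of $H$ satisfies $D \leqslant 2d$. Moreover, the neighbourhood in $H$ of any vertex is a subset of its neighbourhood in $G$, hence is still $r$-colourable, so $H$ inherits the hypotheses of Proposition \ref{prop:alon_bound}.

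Next I would apply Proposition \ref{prop:alon_bound} to $H$ and then exploit monotonicity. Using $|W|\geqslant v/2$, the inequality $D\leqslant 2d$, and the fact that $x\mapsto (\ld x)/x$ is decreasing for $x\geqslant e$ (the sign of its derivative is that of $1-\ln x$, independently of the base), one obtains
\begin{align*}
\alpha(G)\geqslant\alpha(H)\geqslant\frac{|W|\ld D}{160\qc D\ld(r+1)}\geqslant\frac{v}{320\ld(r+1)}\cdot\frac{\ld D}{D}\geqslant\frac{v}{320\ld(r+1)}\cdot\frac{\ld(2d)}{2d}=\frac{v\ld(2d)}{640\qc d\ld(r+1)},
\end{align*}
which is precisely the claimed bound, with the constant $640=2\cdot 2\cdot 160$ accounting for the factor $2$ lost in $|W|\geqslant v/2$ and the factor $2$ lost in $D\leqslant 2d$.

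The main point requiring care is the regime in which the monotonicity of $(\ld x)/x$ fails, namely when $H$ has very small maximum degree ($D\leqslant 2$, so $D$ lies below $e$) and the displayed application of Proposition \ref{prop:alon_bound} is no longer valid. There I would replace that step by a direct estimate: a graph of maximum degree at most two is a disjoint union of paths and cycles and hence has an independent set on at least a third of its vertices, so $\alpha(G)\geqslant\alpha(H)\geqslant|W|/3\geqslant v/6$; since $r\geqslant 1$ and $d\geqslant 1$ force $640\qc d\ld(r+1)\geqslant 6\ld(2d)$, this already dominates the target quantity. Checking this boundary case—and thereby confirming that the constant $640$ works uniformly for all $d\geqslant 1$—is the only delicate verification; the bulk of the argument is the clean trimming reduction above.
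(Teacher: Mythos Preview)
The paper does not actually prove Theorem~\ref{thm:alon_bound2}; it merely cites \cite[Theorem~1.1]{996A0} and remarks that ``the constant in the bound can be easily verified from the proof there.'' So there is no in-paper argument to compare your proposal against.

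Your reduction is correct and is a genuinely useful addition: by trimming the vertices of degree exceeding $2d$ (a Markov-type step keeping at least $v/2$ vertices), you reduce the average-degree statement to the maximum-degree statement of Proposition~\ref{prop:alon_bound}, which the paper already quotes. The two factors of~$2$ lost---one from $|W|\geqslant v/2$, one from $D\leqslant 2d$---explain the constant $640=4\cdot 160$ transparently, and your handling of the degenerate regime $D\leqslant 2$ (where Proposition~\ref{prop:alon_bound} gives nothing useful because $\ld D$ may vanish) via the paths-and-cycles bound $\alpha(H)\geqslant |W|/3$ is the right patch; the numerical check $640\,d\,\ld(r+1)\geqslant 640d\geqslant 6\ld(2d)$ for $d\geqslant 1$, $r\geqslant 1$ goes through as you say. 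Alon's original argument works directly with the average degree rather than passing through the maximum-degree version, so your route is different in spirit, but for the purposes of this paper it has the advantage of making Theorem~\ref{thm:alon_bound2} an immediate corollary of the already-cited Proposition~\ref{prop:alon_bound} instead of a second black-box import.
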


\begin{lemma}\label{lemma:prob_lower_bnd_independence}
  Let $\varepsilon \leqslant 1$ be positive. If $D$ is an oriented graph with $v$ vertices, average degree $d \geq 1$, and $h \leqslant v d^{2-\varepsilon}$ transitive triangles, then
  \begin{align*}
    \alpha(D) \geqslant \frac{\varepsilon v\ld d}{2^{15}d}.
  \end{align*}
\end{lemma}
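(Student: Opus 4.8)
The plan is to run the Ajtai--Koml\'os--Szemer\'edi densification-and-deletion scheme: use the sampling Lemma~\ref{lemma:aks_lemma} to make transitive triangles rare, delete the few survivors to reach a \emph{transitive-triangle-free} oriented graph, and then apply the average-degree form of Alon's bound (Theorem~\ref{thm:alon_bound2}) with $r = 2$. The guiding observation is that in a transitive-triangle-free oriented graph both $N^+(x)$ and $N^-(x)$ are independent sets for every vertex $x$ (an edge inside either set would close up with $x$ into a transitive triangle), so the neighbourhood of every vertex of the underlying undirected graph is a union of two independent sets, hence $2$-colourable. That is exactly the hypothesis Theorem~\ref{thm:alon_bound2} needs with $r=2$.

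First I would fix a sampling probability $p = c\, d^{\varepsilon/2 - 1}$ with $c$ a small absolute constant (one may take $c = 1/\sqrt{24}$) and $d$ large enough that $p < 1$. Feeding $p$ into Lemma~\ref{lemma:aks_lemma} yields an induced subgraph $D'$ with $v' \geqslant vp/2$ vertices, average degree $d' \leqslant 6dp = O(d^{\varepsilon/2})$, and $h' \leqslant 3hp^3$ transitive triangles. The exponent $\varepsilon/2 - 1$ is chosen precisely so that, under the hypothesis $h \leqslant v d^{2-\varepsilon}$, one gets $h' \leqslant 3 v d^{2-\varepsilon}p^3$, which for this $p$ is at most $v'/4$ after tuning $c$.

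Next I would delete one vertex from each of the at most $h'$ transitive triangles of $D'$, killing all of them and leaving an induced subgraph $D''$ with $v'' \geqslant v' - h' \geqslant \tfrac34 v'$ vertices and $e'' \leqslant e'$ edges. Its average degree therefore satisfies $d'' \leqslant 2e'/v'' \leqslant \tfrac43 d' = O(d^{\varepsilon/2})$, and $D''$ is transitive-triangle-free, so by the observation above every neighbourhood in its underlying undirected graph is $2$-colourable. Applying Theorem~\ref{thm:alon_bound2} with $r=2$ gives $\alpha(D'') \geqslant \frac{v''\,\ld 2d''}{640\, d''\,\ld 3}$, and since $D''$ is an induced subgraph of $D$ we have $\alpha(D) \geqslant \alpha(D'')$. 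Because $x \mapsto \frac{\ld 2x}{x}$ is decreasing once $x$ exceeds an absolute constant, I may replace the actual $d''$ by its upper bound $O(d^{\varepsilon/2})$; the surviving logarithm is $\ld 2d'' \geqslant \tfrac{\varepsilon}{2}\ld d$ up to an additive constant. Substituting $v'' = \Omega(vp) = \Omega(v\, d^{\varepsilon/2 - 1})$ and $d'' = O(d^{\varepsilon/2})$ collapses the two powers of $d^{\varepsilon/2}$ against each other and leaves $\alpha(D) \geqslant \text{(const)}\cdot \varepsilon\,\tfrac{v}{d}\,\ld d$; a check of the accumulated $\tfrac12,\tfrac34,6,640,\ld 3$ factors shows the constant comfortably beats $2^{-16}$ (by a factor of roughly two) once $d$ is large.

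The main obstacle I expect is bookkeeping rather than anything conceptual: one must track the three quantities $v'',d'',h'$ simultaneously through both the sampling and the deletion so that all stray constants and the monotonicity substitution still leave a factor better than $2^{-16}$. The one genuinely delicate point is the degenerate regime where $d''$ is so small that it falls below the decreasing range of $\frac{\ld 2x}{x}$; there, however, $D''$ is sparse enough that $\alpha(D'') \geqslant v''/2$ trivially dominates the target, so that case can be dispatched separately and does not affect the constant.
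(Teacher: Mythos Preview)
Your proposal is correct and follows essentially the same route as the paper: pick $p$ proportional to $d^{\varepsilon/2-1}$, apply Lemma~\ref{lemma:aks_lemma}, delete one vertex per surviving transitive triangle, and feed the resulting $L_3$-free oriented graph into Alon's bound with $r=2$. The only cosmetic differences are that the paper takes $p=\sqrt{d^{\varepsilon-2}/12}$ (so $c=1/\sqrt{12}$ rather than your $1/\sqrt{24}$, giving $h'\leqslant v'/2$ and $v''\geqslant\tfrac14 vp$) and then invokes Alon's bound directly with the parameter $d'$, whereas you are a bit more scrupulous in tracking $d''$ after the deletion and in explicitly citing the average-degree form (Theorem~\ref{thm:alon_bound2}); neither difference changes the argument or the constant.
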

\begin{proof}
\begin{align*}
\text{Let } p = \frac{\sqrt{15} - 3}{6d^{1 - \varepsilon / 2}}.
\end{align*}
  By Lemma \ref{lemma:aks_lemma}, we obtain an oriented graph $D'$ with $v'\geqslant vp/2$ vertices,
		average degree $d' \leqslant 6dp = (\sqrt{15} - 3)d^{\varepsilon/2}$, and $h' \leqslant 3hp^3$ transitive triangles. Since $d^{2-\varepsilon} p^2 = (\sqrt{15} - 3)^2/6^2 = (4 - \sqrt{15})/6$, we get
 \begin{align*}
		 h' \leqslant 3h p^3 \leqslant 3vd^{2 - \varepsilon} p^3 \leqslant \left(4 - \sqrt{15}\right)\frac{vp}{2} \leqslant (4 - \sqrt{15})v'.
\end{align*}
  Deleting one vertex from each of the transitive triangles in $D'$ gives us an $L_3$-free oriented graph 
  $D''$ on $v'' \geqslant (\sqrt{15} - 3)v' \geqslant (\sqrt{15} - 3)vp/2$ vertices. So the neighbourhood of any vertex in $D''$ is $2$-colourable. If $d''$ denotes the average degree of $D''$, then
  \begin{align*}\
  d'' \leqslant \frac{d'}{\sqrt{15} - 3} \leqslant \frac{6dp}{\sqrt{15} - 3} = d^{\varepsilon / 2}.
  \end{align*}
  We distinguish two cases:
  
  First we assume that $d'' \leqslant 1959$. Then by Caro-Wei, \cf\ \cite{979C1, 981W1}, we get
  \begin{align*}
   \alpha(D'') \geqslant & \frac{v''}{d''+ 1} \geqslant \frac{v''}{1960} \geqslant \frac{(\sqrt{15} - 3)vp}{3920} = \frac{(\sqrt{15} - 3)^2v}{23520d^{1 - \varepsilon / 2 }} = \frac{6(4 - \sqrt{15})v}{23520d^{1 - \varepsilon / 2}} \\
   = & \frac{(4 - \sqrt{15})v}{3920d^{1 - \varepsilon / 2}} \geqslant \frac{v}{30864d^{1 - \varepsilon / 2}} = \frac{v\sqrt{d^\varepsilon}}{2^4 \cdot 1929d} \geqslant \frac{v\ld d^\varepsilon}{2^{15}d} = \frac{\varepsilon v\ld d}{2^{15}d}.
     \end{align*}
     The last inequality follows from $\ld(x) / \sqrt{x}$ having a global maximum of value smaller than $2/(e \ln 2)$, which is less than $2^{11}/1929$.
     
     Now we assume that $d'' \geqslant e$. Then, as the function $\ld(x) / x$ is decreasing above $e$ and by Theorem \ref{thm:alon_bound2}, 
  \begin{align*}
    \alpha(D) &\geqslant \alpha(D'')
    \geqslant \frac{1}{640 \ld 3} v'' \frac{\ld d''}{d''}
    \geqslant \frac{1}{640 \ld 3} \frac{vp(\sqrt{15} - 3)}{2} \frac{\ld (d^{\varepsilon/2})}{d^{\varepsilon/2}}\\
    &\geqslant \frac{1}{640 \ld 3} \frac{v(\sqrt{15} - 3)}{2} \frac{(\sqrt{15}-3)}{6d} \ld (d^{\varepsilon/2})\\
    &\geqslant \frac{v(4 - \sqrt{15})\varepsilon \ld d}{2^9 5d \ld 3}
    \geqslant \frac{v\varepsilon \ld d}{2^9 5d(4 + \sqrt{15})\ld 3}
    \geqslant \frac{\varepsilon v \ld d}{2^{15}d}.
  \end{align*}
\end{proof}

\begin{theorem}\label{thm:gen_asymptotic}
  For all natural numbers $m, n \geqslant 2$, 
  \begin{align*}
    r(I_m, L_n) \leqslant 2^{17n} \cdot \frac{m^{n-1}}{(\ld m)^{n-2}}.
  \end{align*}
\end{theorem}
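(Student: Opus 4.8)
The plan is to induct on $n$, using the probabilistic independence bound of Lemma~\ref{lemma:prob_lower_bnd_independence} to gain the extra logarithmic factor at each step. Fix an arbitrary $\opair{I_m}{L_n}$-free oriented graph $D = \opair{V}{A}$, put $v = |V|$, and let $d = 2|A|/v$ be the average degree of the underlying undirected graph; since $D$ is $I_m$-free we have $\alpha(D) \leqslant m - 1$, and the entire problem is to bound $v$ from above. The base case $n = 2$ is immediate, as an $L_2$-free oriented graph is edgeless and so $r(I_m, L_2) = m$, while $n = 3$ is exactly Corollary~\ref{cor:tight_asymptotic_bnd}; in both the constant $2^{19n}$ is wildly generous. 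Likewise, for $m$ below any fixed threshold the number $r(I_m, L_n)$ is a fixed finite quantity (bounded, say, through Lemma~\ref{lemma : exponential bound}) dwarfed by $2^{19n} m^{n-1}/(\ld m)^{n-2}$, so I may freely assume $m$ to be as large as needed.

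For the inductive step $n \geqslant 4$ I would first record two structural estimates. By Lemma~\ref{lemma:neighbourhoods_props}(1) the in- and out-neighbourhoods of $D$ are $\opair{I_m}{L_{n-1}}$-free, so every vertex has total degree below $2r(I_m, L_{n-1})$ and therefore $d < 2r(I_m, L_{n-1})$. To control the number $h$ of transitive triangles, note that each copy of $L_3$ has a unique source $s$, and that the copies with source $s$ are in bijection with the edges lying inside $N^-(s)$. Applying Lemma~\ref{lemma:neighbourhoods_props}(1) once more \emph{inside} the $\opair{I_m}{L_{n-1}}$-free graph induced on $N^-(s)$ shows each vertex there has degree below $2r(I_m, L_{n-2})$, so $N^-(s)$ spans at most $|N^-(s)|\, r(I_m, L_{n-2})$ edges; summing over $s$ and using $\sum_s |N^-(s)| = |A| = vd/2$ gives
\begin{align*}
  h \leqslant \tfrac12\, r(I_m, L_{n-2})\, v\, d.
\end{align*}

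I would then split on the size of $d$ against a threshold $d^\ast$ of order $m^{n-2}/(\ld m)^{n-2}$, that is, of the order of the target bound divided by $m$. If $d < d^\ast$, the Caro--Wei bound $\alpha(D) \geqslant v/(d+1)$ combined with $\alpha(D) < m$ already yields $v < m(d+1) \leqslant 2^{19n} m^{n-1}/(\ld m)^{n-2}$. If $d \geqslant d^\ast$, I set $\varepsilon := 1 - \ld\!\big(\tfrac12 r(I_m, L_{n-2})\big)/\ld d$; a direct computation shows $d^{1-\varepsilon} = \tfrac12 r(I_m, L_{n-2})$, so the triangle estimate rearranges to exactly $h \leqslant v d^{2 - \varepsilon}$, and $\varepsilon > 0$ holds because $d \geqslant d^\ast$ is far above $r(I_m, L_{n-2}) \approx m^{n-3}$. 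Lemma~\ref{lemma:prob_lower_bnd_independence} then gives $\alpha(D) \geqslant 2^{-16}\varepsilon (v/d)\ld d$, and since $\varepsilon \ld d = \ld\!\big(2d/r(I_m, L_{n-2})\big)$, the inequality $\alpha(D) < m$ becomes
\begin{align*}
  v < 2^{16}\, m\, \frac{d}{\ld\!\big(2d/r(I_m, L_{n-2})\big)}.
\end{align*}
Substituting $d < 2r(I_m, L_{n-1})$ and the inductive upper bounds on $r(I_m, L_{n-1})$ and $r(I_m, L_{n-2})$, the ratio $2d/r(I_m, L_{n-2})$ is of order $m/(\ld m)^2$, so its logarithm is $(1 - o(1))\ld m$, whereas the numerator contributes $d \lesssim r(I_m, L_{n-1}) \lesssim m^{n-2}/(\ld m)^{n-3}$; together these give $v \lesssim m^{n-1}/(\ld m)^{n-2}$, closing the induction.

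The conceptual heart of the argument is the calibration of $\varepsilon$: it is chosen precisely so that the logarithm surviving in the denominator is $\ld\!\big(d/r(I_m, L_{n-2})\big) \approx \ld m$ rather than $\ld d \approx (n-2)\ld m$, and this is exactly what upgrades the crude greedy bound $m^{n-1}/(\ld m)^{n-3}$ to the sharper $m^{n-1}/(\ld m)^{n-2}$. The step I expect to be the main obstacle is the quantitative bookkeeping needed to make the single constant $2^{19n}$ work at every level at once: one must verify that the factors accrued in the inductive step---the $2^{16}$ from Lemma~\ref{lemma:prob_lower_bnd_independence}, the factor $2$ from $d < 2r(I_m, L_{n-1})$, and the $(1 - o(1))$ loss in approximating $\ld\!\big(2d/r(I_m, L_{n-2})\big)$ by $\ld m$---multiply up to at most $2^{19}$ per step, and that the ``sufficiently large $d$'' hypothesis of Lemma~\ref{lemma:prob_lower_bnd_independence} together with all $o(1)$ corrections is uniform enough in $m$ that the resulting threshold on $m$ can be absorbed into the base cases.
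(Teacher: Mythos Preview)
Your proposal is correct and follows the same inductive skeleton as the paper (base cases $n=2,3$; for $n\geqslant 4$ feed Lemma~\ref{lemma:prob_lower_bnd_independence} together with the degree bound $d<2r(I_m,L_{n-1})$), but the way you control the transitive triangles is genuinely different. The paper fixes $\varepsilon=1/n$ once and for all and case-splits on whether $h\lessgtr vd^{2-\varepsilon}$: in the triangle-rich case a pigeonhole on the $vd/2$ arcs produces a single arc $(a,b)$ lying in at least $2d^{1-\varepsilon}$ transitive triangles, whose set $V_e$ of common out-neighbours is $\opair{I_m}{L_{n-2}}$-free, forcing $d^{1-\varepsilon}<\tfrac12 r(I_m,L_{n-2})$ and letting a plain Tur\'an bound finish. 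You instead establish the a~priori estimate $h\leqslant\tfrac12 r(I_m,L_{n-2})\,vd$ by counting arcs inside each $N^-(s)$, then choose $\varepsilon$ dynamically so that this inequality \emph{is} $h\leqslant vd^{2-\varepsilon}$, and case-split on $d$ rather than on $h$. The two triangle arguments are dual---your edge count is precisely what the paper's heavy-edge pigeonhole establishes in contrapositive---so neither is deeper than the other. What each buys: the paper's fixed $\varepsilon=1/n$ makes the invocation of Lemma~\ref{lemma:prob_lower_bnd_independence} and the constant-tracking marginally cleaner (no need to verify that a $d$-dependent $\varepsilon$ stays bounded below), while your direct triangle bound avoids the heavy-edge/Tur\'an branch entirely and makes the role of $r(I_m,L_{n-2})$ more transparent. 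Your own concern about the ``sufficiently large $d$'' hypothesis is well-placed but harmless: in your large-$d$ regime one checks $\varepsilon\ld d=\ld\!\big(2d/r(I_m,L_{n-2})\big)\gtrsim\ld m$ and $\ld d\lesssim (n-2)\ld m$, so $\varepsilon\gtrsim 1/(n-2)$, which is exactly the paper's constant and makes the threshold on $d$ uniform in $m$.
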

\begin{proof}
  We prove 
   the bound by induction on $n$.
  We already know that $r(I_m, L_2) \leqslant m$ and, by Corollary \ref{cor:tight_asymptotic_bnd},
  $r(I_m, L_3) \leqslant 2^9 \cdot \frac{m^2}{\ld m}$. Fix $n \geqslant 4$ and assume that the claim
  is true for $n-1$ and $n-2$.
  
  Suppose that $D$ is an $L_n$-free oriented graph on 
  \begin{align*}
    v \geqslant 2^{17n} \cdot \frac{m^{n-1}}{(\ld m)^{n-2}}
  \end{align*}
  vertices. We will argue that $\alpha(D) \geqslant m$. Let $\varepsilon = \frac{7}{8n - 8}$. Furthermore, let $d$ and $\bar{d}$ denote the maximum and average degrees of the vertices in $D$, respectively.
  
  \paragraph*{Case 1.} $\bar{d} \leqslant 7$. Then by Tur\'an's bound,
  \begin{align*}
  \alpha(D) \geqslant \frac{2^{17n} m^{n - 1}}{(7 + 1)(\ld m)^{n-2}} \geqslant \frac{2^{17} m^{n - 1}}{8(\ld m)^{n-2}} \geqslant \frac{2^{14} m^{n - 1}}{(\ld m)^{n-2}} \geqslant 2^{14} m \geqslant m.
  \end{align*}
  
  \paragraph*{Case 2.} $\bar{d} \geqslant 3$ and the number of transitive triangles in $D$ is at least $v \cdot d^{2-\varepsilon}$.
  The graph $D$ contains at most $vd/2$ edges. By double counting there exists an oriented edge $e = \opair{a}{b}$ in $D$
  such that $\opair{a}{b}$ lies in at least
  \begin{align*}
    \frac{vd^{2-\varepsilon}}{vd/2}
  \end{align*}
  transitive triangles of the form $\{ \opair{a}{b}, \opair{b}{v}, \opair{a}{v} \}$.
  Let $V_e$ denote the set of vertices $v$ such that $\{ \opair{a}{b}, \opair{b}{v}, \opair{a}{v} \}$
  is a transitive triangle of $D$. Then $\card{V_e} \geqslant 2d^{1-\varepsilon}$.
  
  If there is an oriented subgraph $H$ isomorphic to $L_{n-2}$ 
  in the subgraph $D'$ induced on $V_e$, then the induced subgraph on $\{ a, b \} \cup V(H)$
  is isomorphic to $L_n$. Hence, $D'$ is $\upair{I_m}{L_{n-2}}$-free. Hence,
  \begin{align*}
   2d^{1-\varepsilon} \leqslant \card{V_e} < r(I_m, L_{n-2}).
  \end{align*}
  Hence,
  \begin{align*}
    d 	& < \left(\frac{r(I_m, L_{n - 2})}{2}\right)^\frac{1}{1-\varepsilon}
		 < \left(2^{17n - 35} \cdot \frac{m^{n - 3}}{(\ld m)^{n - 4}}\right)^{1 + \frac{7}{8n - 15}} \\
	  	& = 2^{17n - 35 + \frac{119n - 245}{8n-15}} \cdot 
	  	  \frac{m^{n - 3 + \frac{7n-21}{8n-15}}}
		       {(\ld m)^{n - 4 + \frac{7n -28}{8n-15}}} 
	  	 < 2^{17n - 35 + 15} \cdot 
	  	  \frac{m^{n - 3 + \frac{7}{8}}}
		       {(\ld m)^{n - 4}} \\
	  	& = 2^{17n - 20} \cdot 
	  	  \frac{m^{n - \frac{17}{8}}}
		       {(\ld m)^{n - 4}}.
  \end{align*}
  By Tur\'an's bound, 
  \begin{align*}
    \alpha(D) \geqslant \frac{v}{d+1} \geqslant \frac{v}{2d} \geqslant \frac{2^{19} m^\frac{9}{8}}{(\ld m)^2} \geqslant 2^{12}m\geqslant m,
  \end{align*}
  as $\sqrt[8]{m}(\ld m)^{-2}$ has a minimum of $(e\log 2/16)^2 > 2^{-7}$ at $e^{16}$.

  \paragraph*{Case 3.} $\bar{d} \geqslant 3$ and there are fewer than $v \cdot d^{2-\varepsilon}$ transitive triangles in $D$.
  By Lemma \ref{lemma:neighbourhoods_props}\eqref{neighbourhoods_props_one}, we have
  \begin{align*}
    d < 2r(I_m, L_{n-1}) \leqslant 2^{17n - 16} \cdot \frac{m^{n-2}}{(\ld m)^{n-3}}.
  \end{align*}
As $\ld(x)/x$ is decreasing for $x \geqslant 3$, by Lemma \ref{lemma:prob_lower_bnd_independence},
  \begin{align*}
    \alpha(D) \geqslant & \frac{\varepsilon v \cdot \ld \bar{d}}{2^{15}\bar{d} }\geqslant \frac{\varepsilon v\ld d}{2^{15}d} \geqslant \frac{2^{17n - 15}\varepsilon m^{n - 1}\ld d}{d(\ld m)^{n - 2}} \\
        > & \frac{2\varepsilon m(17n - 16 + (n - 2)\ld m - (n - 3)\ld\ld m)}{\ld m} \\
        = & 7m\cdot\frac{17n - 16 + (n - 2)\ld m - (n - 3)\ld\ld m}{4(n - 1)\ld m} \\
        = & \frac{7m}{4}\left(\frac{17}{\ld m} + \frac{1}{(n - 1)\ld m} + \left(1 - \frac{2}{n - 1} \right)\left(1 - \frac{\ld\ld m}{\ld m}\right) + \frac{1}{n - 1}\right)\\
        \geqslant & \frac{7m}{4}\left(\frac{17}{\ld m} + \left(1 - \frac{2}{n - 1} \right)\left(1 - \frac{\ld\ld m}{\ld m}\right) + \frac{1}{n - 1}\right)
        \geqslant m.
     \end{align*}
   To see that the last inequality is true, we distinguish two subcases. First we assume that $(\ld m)^4 \geqslant m$. Then $\ld m \leqslant 16$, so:
   \begin{align*}
      &  \frac{7m}{4}\left(\frac{17}{\ld m} + \left(1 - \frac{2}{n - 1} \right)\left(1 - \frac{\ld\ld m}{\ld m}\right) + \frac{1}{n - 1}\right)\\
   \geqslant &  \frac{7m}{4}\left(\frac{17}{\ld m} + \left(1 - \frac{2}{n - 1} \right)\left(1 - \frac{\ld\ld m}{\ld m}\right)\right)\\
   \geqslant &  \frac{7m}{4}\cdot\frac{17}{\ld m}
    \geqslant \frac{7m}{4}\cdot\frac{17}{16}
    \geqslant \frac{119m}{64}
    \geqslant \frac{9m}{5}
    \geqslant m.
   \end{align*}
 Now we assume that $(\ld m)^4 \leqslant m$. Also recall that $n \geqslant 4$. Then
   \begin{align*}
   &  \frac{7m}{4}\left(\frac{17}{\ld m} + \left(1 - \frac{2}{n - 1} \right)\left(1 - \frac{\ld\ld m}{\ld m}\right) + \frac{1}{n - 1}\right)\\
       \geqslant & \frac{7m}{4}\left(\left(1 - \frac{2}{n - 1} \right)\left(1 - \frac{\ld\ld m}{\ld m}\right) + \frac{1}{n - 1}\right)\\
         \geqslant & \frac{7m}{4}\left(\left(1 - \frac{2}{n - 1} \right)\left(1 - \frac{1}{4}\right) + \frac{1}{n - 1}\right)\\
         \geqslant & \frac{7m}{4}\left(\left(1 - \frac{2}{n - 1} \right)\frac{3}{4} + \frac{1}{n - 1}\right)
         \geqslant \frac{7m}{4}\left(\frac{3}{4} - \frac{1}{2n - 2}\right)\\
         \geqslant &\frac{7m}{4}\left(\frac{3}{4} - \frac{1}{6}\right)
         \geqslant \frac{7m}{4}\cdot\frac{7}{12}
         \geqslant \frac{49m}{48}     
      \geqslant m.
   \end{align*}
 \end{proof}

This implies Theorem \ref{thm:gen_asymptotic_extn_intro}.

\section{Coda}
There are more open problems in finite combinatorics stemming from set theory. Determining $r(I_3, L_4)$ would continue our work and seems feasible given the size of the candidates for examples of $\upair{I_3}{L_4}$-free graphs.


Finally, for the Ramsey numbers $r(\omega^m, n)$ formulae have been found for all natural numbers $m \ne 4$ and all natural numbers $n$ by Nosal in \cite{975N0, 979N0}. The determination of the numbers $r(\omega^4, n)$ by a formula, however, has still to be accomplished.


\subsection*{A Formula for Small $m$ and $n$}

We provide the following---admittedly slightly baroque---formula. It gives asymptotically suboptimal upper bounds for $r(I_m, L_n)$ but provides the state of the art for small $m$ and $n$.
Let 
\begin{align*}
v(m, n) \dfeq \sum_{i = 0}^{n - 2}\binom{i + m - 1}{i+1} 2^i  - \binom{m + n - 6}{m - 4}2^{n - 3} + 1.
\end{align*}

The following proposition can be proved from 
Lemma~\ref{lemma:easy_upper_bound_general_case} by induction on $m$ and $n$
\begin{proposition}
\label{theorem : over the top formula}
We have $r(I_m, L_n) \leqslant v(m, n)$ for all natural numbers $m$ and $n$ with $m \geqslant 2$ and $n \geqslant 3$.
\end{proposition}


\end{document}